\newtheorem{algorithm}{Weak Galerkin Algorithm}
\newcommand{\bq}{{\bf q}}
\newcommand{\bn}{{\bf n}}
\newcommand{\bx}{{\bf x}}
\newcommand{\bv}{{\bf v}}
\def\T{{\mathcal T}}
\def\E{{\mathcal E}}
\def\l{{\langle}}
\def\r{{\rangle}}
\def\bn{{\bf n}}
\def\bq{{\bf q}}
\def\aa{\mathfrak{a}}
\def\bbQ{\mathbb{Q}}
\newcommand{\pT}{{\partial T}}
\newtheorem{defi}{Definition}[section]
\def\3bar{{|\hspace{-.02in}|\hspace{-.02in}|}}
\renewcommand{\ldots}{\dotsc}
\title{Weak Galerkin Finite Element
Methods on Polytopal Meshes}
\author{Lin Mu\thanks{Department of Mathematics, University of Arkansas at
Little Rock, Little Rock, AR 72204} \and Junping
Wang\thanks{Division of Mathematical Sciences, National Science
Foundation, Arlington, VA 22230 (jwang@\break nsf.gov). The research
of Wang was supported by the NSF IR/D program, while working at
National Science Foundation. However, any opinion, finding, and
conclusions or recommendations expressed in this material are those
of the author and do not necessarily reflect the views of the
National Science Foundation,} \and Xiu Ye\thanks{Department of
Mathematics, University of Arkansas at Little Rock, Little Rock, AR
72204 (xxye@ualr.edu). This research was supported in part by
National Science Foundation Grant DMS-1115097.}}
\begin{document}

\maketitle

\begin{abstract}
This paper introduces a new weak Galerkin (WG) finite element method
for second order elliptic equations on polytopal meshes. This
method, called WG-FEM, is designed by using a discrete weak gradient
operator applied to discontinuous piecewise polynomials on finite
element partitions of arbitrary polytopes with certain shape
regularity. The paper explains how the numerical schemes are
designed and why they provide reliable numerical approximations for
the underlying partial differential equations. In particular,
optimal order error estimates are established for the corresponding
WG-FEM approximations in both a discrete $H^1$ norm and the standard
$L^2$ norm. Numerical results are presented to demonstrate the
robustness, reliability, and accuracy of the WG-FEM. All the results
are derived for finite element partitions with polytopes. Allowing
the use of discontinuous approximating functions on arbitrary
polytopal elements is a highly demanded feature for numerical
algorithms in scientific computing.
\end{abstract}

\begin{keywords}
weak Galerkin, finite element methods, discrete gradient, second-order
elliptic problems, polyhedral meshes
\end{keywords}

\begin{AMS}
Primary: 65N15, 65N30; Secondary: 35J50
\end{AMS}
\pagestyle{myheadings}

\section{Introduction}\label{Section:Introduction}
In this paper, we are concerned with a further and new development
of weak Galerkin (WG) finite element methods for partial
differential equations. Our model problem is a second-order elliptic
equation which seeks an unknown function $u=u(x)$ satisfying
\begin{equation}\label{pde}
-\nabla\cdot(a(x,u,\nabla u)\nabla u)=f(x), \quad \mbox{in}\ \Omega,
\end{equation}
where $\Omega$ is a polytopal domain in $\mathbb{R}^d$ (polygonal or
polyhedral domain for $d=2,3$), $\nabla u$ denotes the gradient of
the function $u=u(x)$, and $a=a(x,u,\nabla u)$ is a symmetric
$d\times d$ matrix-valued function in $\Omega$. We shall assume that
the differential operator is strictly elliptic in $\Omega$; that is,
there exists a positive number $\lambda>0$ such that
\begin{equation}\label{ellipticity}
\xi^ta(x,\eta,p)\xi\ge \lambda \xi^t\xi,\qquad\forall
\xi\in\mathbb{R}^d,
\end{equation}
for all $x\in \Omega, \eta\in \mathbb{R}, p\in \mathbb{R}^d$. Here
$\xi$ is understood as a column vector and $\xi^t$ is the transpose
of $\xi$. We also assume that the differential operator has bounded
coefficients; that is for some constant $\Lambda$ we have
\begin{equation}\label{boundedness}
 |a(x,\eta,p)|\leq \Lambda,
 \end{equation}
for all $x\in \Omega, \eta\in \mathbb{R},$ and $p\in \mathbb{R}^d$.

Introduce the following form
\begin{equation}\label{bilinear-form}
\mathfrak{a}(\phi;u,v):=\int_\Omega a(x,\phi,\nabla \phi)\nabla
u\cdot\nabla v dx.
\end{equation}
For simplicity, let the function $f$ in (\ref{pde}) be locally
integrable in $\Omega$. We shall consider solutions of (\ref{pde})
with a non-homogeneous Dirichlet boundary condition
\begin{equation}\label{bc}
u=g, \quad \mbox{on}\  \partial\Omega,
\end{equation}
where $g\in H^{\frac12}(\partial\Omega)$ is a function defined on
the boundary of $\Omega$. Here $H^1(\Omega)$ is the Sobolev space
consisting of functions which, together with their gradients, are
square integrable over $\Omega$. $H^{\frac12}(\partial\Omega)$ is
the trace of $H^1(\Omega)$ on the boundary of $\Omega$. The
corresponding weak form seeks $u\in H^1(\Omega)$ such that $u=g$ on
$\partial\Omega$ and
\begin{eqnarray}\label{weak-form}
\mathfrak{a}(u;u,v)=F(v),\qquad \forall v\in H_0^1(\Omega),
\end{eqnarray}
where $F(v)\equiv \int_\Omega f v dx$.

Galerkin finite element methods for (\ref{weak-form}) refer to
numerical techniques that seek approximate solutions from a finite
dimensional space $V_h$ consisting of piecewise polynomials on a
prescribed finite element partition $\T_h$. The method is called
conforming if $V_h$ is a subspace of $H^1(\Omega)$. Conforming
finite element methods are then formulated by solving $u_h\in V_h$
such that $u_h=I_hg$ on $\partial\Omega$ and
\begin{eqnarray}\label{w1-conforming}
\mathfrak{a}(u_h;u_h,v)=F(v),\qquad \forall v\in V_h\cap
H_0^1(\Omega),
\end{eqnarray}
where $I_hg$ is a certain approximation of the Dirichlet boundary
value. When $V_h$ is not a subspace of $H^1(\Omega)$, the form
$\mathfrak{a}(\phi;u,v)$ is no longer meaningful since the gradient
operator is not well-defined for non-$H^1$ functions in the
classical sense. Nonconforming finite element methods arrive when
the gradients in $\mathfrak{a}(\phi;u,v)$ are taken locally on each
element where the finite element functions are polynomials. More
precisely, the form $\mathfrak{a}(\phi;u,v)$ in nonconforming finite
element methods is given element-by-element as follows
\begin{equation}\label{nc-form}
\mathfrak{a}_h(\phi;u,v):=\sum_{T\in \T_h}\int_T a(x,\phi,\nabla
\phi)\nabla u\cdot\nabla v dx.
\end{equation}
When $V_h$ is close to be conforming, the form
$\mathfrak{a}_h(\phi;u,v)$ shall be an acceptable approximation to
the original form $\mathfrak{a}(\phi;u,v)$. The key in the
nonconforming method is to explore the maximum non-conformity of
$V_h$ when the approximate form $\mathfrak{a}_h(\phi;u,v)$ is
required to be sufficiently close to the original form.

A natural generalization of the nonconforming finite element method
would occur when the following extended form of (\ref{nc-form}) is
employed
\begin{equation}\label{wg-form}
\mathfrak{a}_w(\phi;u,v):=\sum_{T\in \T_h}\int_T a(x,\phi,\nabla_w
\phi)\nabla_w u\cdot\nabla_w v dx,
\end{equation}
where $\nabla_w$ is an approximation of $\nabla$ locally on each
element. By viewing $\nabla_w$ as a weakly defined gradient
operator, the form $\mathfrak{a}_w(\phi;u,v)$ would give a new class
of numerical methods called {\em weak Galerkin (WG)} finite element
methods.

In general, weak Galerkin refers to finite element techniques for
partial differential equations in which differential operators
(e.g., gradient, divergence, curl, Laplacian) are approximated by
weak forms as distributions. In \cite{wy}, a WG method was
introduced and analyzed for second order elliptic equations based on
a {\em discrete weak gradient} arising from local {\em RT} \cite{rt}
or {\em BDM} \cite{bdm} elements. Due to the use of the {\em RT} and
{\em BDM} elements, the WG finite element formulation of \cite{wy}
was limited to classical finite element partitions of triangles
($d=2$) or tetrahedra ($d=3$). In \cite{wy1}, a weak Galerkin finite
element method was developed for the second order elliptic equation
in the mixed form. The use of a stabilization for the flux variable
in the mixed formulation is the key to the WG mixed finite element
method of \cite{wy1}. The resulting WG mixed finite element schemes
turned out to be applicable for general finite element partitions
consisting of shape regular polytopes (e.g., polygons in 2D and
polyhedra in 3D), and the stabilization idea opened a new door for
weak Galerkin methods.

The goal of this paper is to apply the stabilization idea to the
form $\mathfrak{a}_w(\phi;u,v)$, and thus to develop a new weak
Galerkin method for (\ref{pde})-(\ref{bc}) in the primary variable
$u$ that shall admit general finite element partitions consisting of
arbitrary polytopal elements. The resulting WG method will no longer
be limited to {\em RT} and {\em BDM} elements in the computation of
the discrete weak gradient $\nabla_w$. In practice, allowing
arbitrary shape in finite element partition provides a convenient
flexibility in both numerical approximation and mesh generation,
especially in regions where the domain geometry is complex. Such a
flexibility is also very much appreciated in adaptive mesh
refinement methods.

The main contribution of this paper is three fold: (1) the WG finite
element method to be described in section \ref{Section:wg-fem}
allows finite element partitions of arbitrary polytopes which are
shape regular in the sense as defined in \cite{wy1}, (2) the finite
element spaces constitute regular polynomial spaces on each
element/face which are computation-friendly, and (3) the WG finite
element scheme retains the mass conservation property of the
original system locally on each element.

One close relative of the WG finite element method of this paper is
the hybridizable discontinuous Galerkin (HDG) method \cite{cgl}. In
fact, it can be proved that our weak Galerkin method is identical to
HDG method for the Poisson equation. However, the WG method differs
from HDG for the model problem (\ref{pde}) with nonconstant
coefficient matrix $a$ and more sophisticated problems. These two
methods are fundamentally different in concept and formulation. The
key element of HDG is the flux variable, while the key element for
WG is the gradient operator through weak derivatives. For either
nonlinear or degenerate coefficient matrix $a=a(x,u,\nabla u)$, the
WG finite element method has obvious advantage over HDG since
$\nabla u$ is approximated by $\nabla_w u$ and there is no need to
invert the matrix $a$ in WG formulations. More importantly, the
concept of weak derivatives makes WG a widely applicable numerical
technique for a large variety of partial differential equations
which we shall report in forthcoming papers.

The paper is organized as follows. In section
\ref{Section:preliminaries}, we introduce some standard notations in
Sobolev spaces. In section \ref{Section:weak-gradient}, we review
the definition and approximation of the weak gradient operator.  In
section \ref{Section:wg-fem}, we provide a detailed description for
the new WG finite element scheme, including a discussion on the
element shape regularity assumption. In section
\ref{Section:L2projections}, we define some local projection
operators and then derive some approximation properties which are
useful in error analysis. In section
\ref{Section:wg-massconservation}, we show that the WG finite
element method retains the mass conservation property of the
original system locally on each element. In section
\ref{Section:existence}, we show that the weak Galerkin finite
element scheme for the nonlinear problem has at least one solution.
The solution existence is based on the Leray-Schauder fixed point
theorem. In section \ref{Section:error-analysis}, we shall establish
an optimal order error estimate for the WG finite element
approximation in a $H^1$-equivalent discrete norm for the linear
case of (\ref{pde}). We shall also derive an optimal order error
estimate in the $L^2$ norm by using a duality argument as was
commonly employed in the standard Galerkin finite element methods
\cite{ci, sue}. Finally in section
\ref{Section:numerical-experiments}, we present some numerical
results which confirm the theory developed in earlier sections.

\section{Preliminaries and Notations}\label{Section:preliminaries}

Let $D$ be any domain in $\mathbb{R}^d, d=2, 3$. We use the standard
definition for the Sobolev space $H^s(D)$ and their associated inner
products $(\cdot,\cdot)_{s,D}$, norms $\|\cdot\|_{s,D}$, and
seminorms $|\cdot|_{s,D}$ for any $s\ge 0$. For example, for any
integer $s\ge 0$, the seminorm $|\cdot|_{s, D}$ is given by
$$
|v|_{s, D} = \left( \sum_{|\alpha|=s} \int_D |\partial^\alpha v|^2
dD \right)^{\frac12}
$$
with the usual notation
$$
\alpha=(\alpha_1, \ldots, \alpha_d), \quad |\alpha| =
\alpha_1+\ldots+\alpha_d,\quad
\partial^\alpha =\prod_{j=1}^d\partial_{x_j}^{\alpha_j}.
$$
The Sobolev norm $\|\cdot\|_{m,D}$ is given by
$$
\|v\|_{m, D} = \left(\sum_{j=0}^m |v|^2_{j,D} \right)^{\frac12}.
$$

The space $H^0(D)$ coincides with $L^2(D)$, for which the norm and
the inner product are denoted by $\|\cdot \|_{D}$ and
$(\cdot,\cdot)_{D}$, respectively. When $D=\Omega$, we shall drop
the subscript $D$ in the norm and inner product notation.

The space $H({\rm div};D)$ is defined as the set of vector-valued
functions on $D$ which, together with their divergence, are square
integrable; i.e.,
\[
H({\rm div}; D)=\left\{ \bv: \ \bv\in [L^2(D)]^d, \nabla\cdot\bv \in
L^2(D)\right\}.
\]
The norm in $H({\rm div}; D)$ is defined by
$$
\|\bv\|_{H({\rm div}; D)} = \left( \|\bv\|_{D}^2 + \|\nabla
\cdot\bv\|_{D}^2\right)^{\frac12}.
$$

\section{Weak Gradient}\label{Section:weak-gradient}

The key in weak Galerkin methods is the use of discrete weak
derivatives in the place of strong derivatives in the variational
form for the underlying partial differential equations. For the
model problem (\ref{weak-form}), the gradient $\nabla$ is the
principle differential operator involved in the variational
formulation. Thus, it is critical to define and understand discrete
weak gradients for the corresponding numerical methods. Following
the idea originated in \cite{wy}, the discrete weak gradient is
given by approximating the weak gradient operator with piecewise
polynomial functions; details are presented in the rest of this
section.

Let $K$ be any polytopal domain with boundary $\partial K$. A {\em
weak function} on the region $K$ refers to a function $v=\{v_0,
v_b\}$ such that $v_0\in L^2(K)$ and $v_b\in H^{\frac12}(\partial
K)$. The first component $v_0$ can be understood as the value of $v$
in $K$, and the second component $v_b$ represents $v$ on the
boundary of $K$. Note that $v_b$ may not necessarily be related to
the trace of $v_0$ on $\partial K$ should a trace be well-defined.
Denote by $W(K)$ the space of weak functions on $K$; i.e.,
\begin{equation}\label{hi.888}
W(K):= \{v=\{v_0, v_b \}:\ v_0\in L^2(K),\; v_b\in
H^{\frac12}(\partial K)\}.
\end{equation}
The weak gradient operator, as was introduced in \cite{wy}, is
defined as follows.
\medskip

\begin{defi}
The dual of $L^2(K)$ can be identified with itself by using the
standard $L^2$ inner product as the action of linear functionals.
With a similar interpretation, for any $v\in W(K)$, the {\em weak
gradient} of $v$ is defined as a linear functional $\nabla_w v$ in
the dual space of $H(div,K)$ whose action on each $q\in H(div,K)$ is
given by
\begin{equation}\label{weak-gradient}
(\nabla_w v, q)_K := -(v_0, \nabla\cdot q)_K + \langle v_b,
q\cdot\bn\rangle_{\partial K},
\end{equation}
where $\bn$ is the outward normal direction to $\partial K$,
$(v_0,\nabla\cdot q)_K=\int_K v_0 (\nabla\cdot q)dK$ is the action
of $v_0$ on $\nabla\cdot q$, and $\langle v_b,
q\cdot\bn\rangle_{\partial K}$ is the action of $q\cdot\bn$ on
$v_b\in H^{\frac12}(\partial K)$.
\end{defi}

\medskip

The Sobolev space $H^1(K)$ can be embedded into the space $W(K)$ by
an inclusion map $i_W: \ H^1(K)\to W(K)$ defined as follows
$$
i_W(\phi) = \{\phi|_{K}, \phi|_{\partial K}\},\qquad \phi\in H^1(K).
$$
With the help of the inclusion map $i_W$, the Sobolev space $H^1(K)$
can be viewed as a subspace of $W(K)$ by identifying each $\phi\in
H^1(K)$ with $i_W(\phi)$. Analogously, a weak function
$v=\{v_0,v_b\}\in W(K)$ is said to be in $H^1(K)$ if it can be
identified with a function $\phi\in H^1(K)$ through the above
inclusion map. It is not hard to see that the weak gradient is
identical with the strong gradient (i.e., $\nabla_w v=\nabla v$) for
smooth functions $v\in H^1(K)$.

\medskip
Recall that the discrete weak gradient operator was defined by
approximating $\nabla_w$ in a polynomial subspace of the dual of
$H(div,K)$. More precisely, for any non-negative integer $r\ge 0$,
denote by $P_{r}(K)$ the set of polynomials on $K$ with degree no
more than $r$. The discrete weak gradient operator, denoted by
$\nabla_{w,r, K}$, is defined as the unique polynomial
$(\nabla_{w,r, K}v) \in [P_r(K)]^d$ satisfying the following
equation
\begin{equation}\label{discrete-weak-divergence-element}
(\nabla_{w,r, K}v, q)_K = -(v_0,\nabla\cdot q)_K+ \langle v_b,
q\cdot\bn\rangle_{\partial K},\qquad \forall q\in [P_r(K)]^d.
\end{equation}

The discrete weak gradient operator, namely $\nabla_{w,r, K}$ as
defined in (\ref{discrete-weak-divergence-element}), was first
introduced in \cite{wy} where two examples of the polynomial
subspace $[P_r(K)]^d$ were thoroughly discussed and employed for the
second order elliptic problem (\ref{pde})-(\ref{bc}). The two
examples make use of the Raviat-Thomas \cite{rt} and
Brezzi-Douglas-Marini \cite{bdm} elements developed in the classical
mixed finite element method. As a result, the corresponding WG
finite element method of \cite{wy} is closely related to the mixed
finite element method. In this paper, we shall allow a greater
flexibility in the definition and computation of the discrete weak
gradient operator $\nabla_{w,r, K}$ by using the usual polynomial
space $[P_r(K)]^d$. This will result in a new class of WG finite
element schemes with remarkable properties to be detailed in forth
coming sections.

\section{Weak Galerkin Finite Element Schemes}\label{Section:wg-fem}

In finite element methods, mesh generation is a crucial first step
in the algorithm design. For the usual finite element methods
\cite{ci, bf}, the meshes are mostly required to be simplices:
triangles or quadrilaterals in two dimensions and tetrahedra or
hexahedra in three dimensions, or their variations known as
isoparametric elements. Our new weak Galerkin finite element method
is designed to be sufficiently flexible so that general meshes of
polytopes (e.g., polygons in 2D and polyhedra in 3D) are allowed.
For simplicity, we shall refer the elements as polygons or polyhedra
in the rest of the paper.

\subsection{Domain Partition}
Let ${\cal T}_h$ be a partition of the domain $\Omega$ consisting of
polygons in two dimensions or polyhedra in three dimensions
satisfying a set of conditions to be specified. Denote by ${\cal
E}_h$ the set of all edges or flat faces in ${\cal T}_h$, and let
${\cal E}_h^0={\cal E}_h\backslash\partial\Omega$ be the set of all
interior edges or flat faces. For every element $T\in \T_h$, we
denote by $|T|$ the area or volume of $T$ and by $h_T$ its diameter.
Similarly, we denote by $|e|$ the length or area of $e$ and by $h_e$
the diameter of edge or flat face $e\in\E_h$. We also set as usual
the mesh size of $\T_h$ by
$$
h=\max_{T\in\T_h} h_T.
$$
All the elements of $\T_h$ are assumed to be closed and simply
connected polygons or polyhedra. We need some shape regularity for
the partition $\T_h$ described as follows (see \cite{wy1} for more
details).

\medskip
\begin{description}
\item[A1:] \ Assume that there exist two positive constants $\varrho_v$ and $\varrho_e$
such that for every element $T\in\T_h$ we have
\begin{equation}\label{a1}
\varrho_v h_T^d\leq |T|,\qquad \varrho_e h_e^{d-1}\leq |e|
\end{equation}
for all edges or flat faces $e$ of $T$.

\item[A2:] \ Assume that there exists a positive constant $\kappa$ such that for every element
$T\in\T_h$ we have
\begin{equation}\label{a2}
\kappa h_T\leq h_e
\end{equation}
for all edges or flat faces $e$ of $T$.

\item[A3:] \ Assume that the mesh edges or faces are flat. We
further assume that for every $T\in\T_h$, and for every edge/face
$e\in \partial T$, there exists a pyramid $P(e,T, A_e)$ contained in
$T$ such that its base is identical with $e$, its apex is $A_e\in
T$, and its height is proportional to $h_T$ with a proportionality
constant $\sigma_e$ bounded from below by a fixed positive number
$\sigma^*$. In other words, the height of the pyramid is given by
$\sigma_e h_T$ such that $\sigma_e\ge \sigma^*>0$. The pyramid is
also assumed to stand up above the base $e$ in the sense that the
angle between the vector $\bx_e-A_e$, for any $\bx_e\in e$, and the
outward normal direction of $e$ is strictly acute by falling into an
interval $[0, \theta_0]$ with $\theta_0< \frac{\pi}{2}$.

\item[A4:] \ Assume that each $T\in\T_h$ has a circumscribed simplex $S(T)$ that is
shape regular and has a diameter $h_{S(T)}$ proportional to the
diameter of $T$; i.e., $h_{S(T)}\leq \gamma_* h_T$ with a constant
$\gamma_*$ independent of $T$. Furthermore, assume that each
circumscribed simplex $S(T)$ intersects with only a fixed and small
number of such simplices for all other elements $T\in\T_h$.
\end{description}

\medskip
Figure \ref{fig:shape-regular-element} is a depiction of a
shape-regular polygonal element in 2D. As to the property {\bf A3},
for edge $e=AF$, the corresponding pyramid is given by the triangle
$AA_eF$ which is of a similar size as the polygonal element. $\bn_e$
is the outward normal direction to the edge $e$. The angle between
the two vectors $\bn_e$ and $\overrightarrow{A_e\bx_e}$ is strictly
acute for any $\bx_e\in e$.

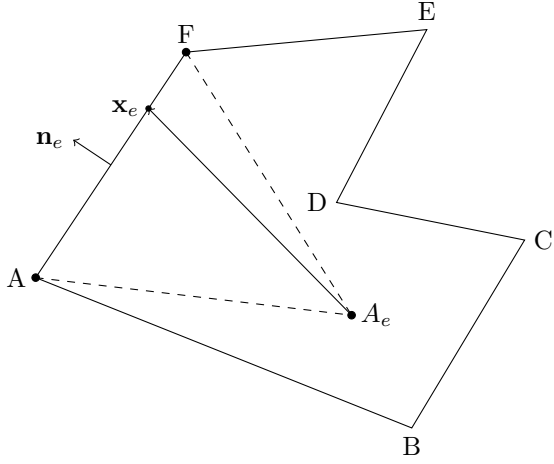
\begin{figure}[h!]
\begin{center}
\begin{tikzpicture}
\coordinate (A) at (-3,0); \coordinate (B) at (2,-2); \coordinate
(C) at (3.5, 0.5); \coordinate (D) at (1,1); \coordinate (E) at
(2.2, 3.3); \coordinate (F) at (-1, 3); \coordinate (CC) at (0,0);
\coordinate (Ae) at (1.2,-0.5); \coordinate (xe) at (-1.5, 2.25);
\coordinate (AFc) at (-2, 1.5); \coordinate (AFcLeft) at (-2.5,
1.83333); \draw node[left] at (xe) {$\bx_e$}; \draw node[right] at
(Ae) {$A_e$}; \draw node[left] at (A) {A}; \draw node[below] at (B)
{B}; \draw node[right] at (C) {C}; \draw node[left] at (D) {D};
\draw node[above] at (E) {E}; \draw node[above] at (F) {F}; \draw
node[left] at (AFcLeft) {$\bn_e$}; \draw
(A)--(B)--(C)--(D)--(E)--(F)--cycle; \draw[dashed](A)--(Ae);
\draw[dashed](F)--(Ae); \draw[->] (Ae)--(xe); \draw[->]
(AFc)--(AFcLeft); \filldraw[black] (A) circle(0.05);
    \filldraw[black] (F) circle(0.05);
    \filldraw[black] (Ae) circle(0.05);
    \filldraw[black] (xe) circle(0.035);
\end{tikzpicture}
\caption{Depiction of a shape-regular polygonal element $ABCDEFA$.}
\label{fig:shape-regular-element}
\end{center}
\end{figure}

\subsection{WG Finite Element Algorithms}
Let ${\cal T}_h$ be a finite element partition that is shape
regular; namely, satisfying the properties {\bf A1-A4}. On each
element $T\in\T_h$, we have a space of weak functions W(T) defined
as in Section \ref{Section:weak-gradient}. Denote by $V$ the weak
function space on $\T_h$ given by
\begin{equation}\label{vspace}
 V:=\{v=\{v_0, v_b\}:\ \{v_0, v_b\}|_T\in W(T),
T\in \T_h \},
\end{equation}
where $\{v_0, v_b\}|_T:=\{(v_0)|_T, (v_b)|_{\partial T}\}$ is the
restriction of $v$ on the element $T$.

For any given integer $k\ge 1$, let $W_k(T)$ be the discrete weak
function space consisting of polynomials of degree $k$ in $T$ and
piecewise polynomials of degree $k$ on $\partial T$; i.e.,
\begin{equation}\label{Wkspace}
W_k(T):=\{v=\{v_0,v_b\}:\; {v_0}|_T\in P_k(T), \ v_b|_e\in P_k(e), \
e\in\partial T\}.
\end{equation}
Furthermore, let $V_h$ be the weak Galerkin finite element space
defined as follows
\begin{equation}\label{vhspace}
V_h:=\{v=\{v_0,v_b\}:\; \{v_0,v_b\}|_T\in W_k(T),\ T\in \T_h\}
\end{equation}
and
\begin{equation}\label{vh0space}
V^0_h:=\{v: \ v\in V_h,\  v_b=0 \mbox{ on } \partial\Omega\}.
\end{equation}

Denote by $\nabla_{w,k-1}$ the discrete weak gradient operator on
the finite element space $V_h$ computed by using
(\ref{discrete-weak-divergence-element}) on each element $T$; i.e.,
$$
(\nabla_{w,k-1}v)|_T =\nabla_{w,k-1, T} (v|_T),\qquad \forall v\in
V_h.
$$
For simplicity of notation, from now on we shall drop the subscript
$k-1$ in the notation $\nabla_{w,k-1}$ for the discrete weak
gradient.

Now we introduce two forms on $V_h$ as follows:
\begin{eqnarray*}
\aa(\phi;v,\;w) & = & \sum_{T\in {\cal T}_h}\int_T a(x,\phi,\nabla_w\phi)\nabla_w v\cdot\nabla_w w dT,\\
s(v,\;w) & = & \rho\sum_{T\in {\cal T}_h} h_T^{-1}\langle
v_0-v_b,\;\;w_0-w_b\rangle_{\partial T},
\end{eqnarray*}
where $\rho>0$ is a parameter with constant value. In practical
computation, one might set $\rho=1$. Denote by
$\aa_s(\cdot;\cdot,\;\cdot)$ a stabilization of
$\aa(\cdot;\cdot,\;\cdot)$ given by
$$
\aa_s(\phi;v,\;w):=\aa(\phi;v,\;w)+s(v,\;w).
$$

\begin{algorithm}
A numerical approximation for (\ref{pde}) and (\ref{bc}) can be
obtained by seeking $u_h=\{u_0,\;u_b\}\in V_h$ satisfying both $u_b=
Q_b g$ on $\partial \Omega$ and the following equation:
\begin{equation}\label{wg}
\aa_s(u_h;u_h,\;v)=(f,\;v_0), \quad\forall\ v=\{v_0,\; v_b\}\in
V_h^0,
\end{equation}
where $Q_b g$ is an approximation of the Dirichlet boundary value in
the polynomial space $P_k(\partial T\cap \partial\Omega)$. For
simplicity, one may take $Q_b g$ as the standard $L^2$ projection of
the boundary value $g$ on each boundary segment.
\end{algorithm}

\section{$L^2$ Projection Operators}\label{Section:L2projections}

There are two basic polynomial spaces associated with each element
$T\in \T_h$. The first one is the local finite element space
$W_k(T)$ and the second one is the polynomial space $[P_{k-1}(T)]^d$
which was utilized to define the discrete weak gradient $\nabla_w$
in (\ref{discrete-weak-divergence-element}); namely, the operator
$\nabla_{w,r,K}$ with $r=k-1$ and $K=T$. For simplicity of
discussion, we introduce the following notation
$$
G_{k-1}(T):=[P_{k-1}(T)]^d,
$$
and shall call this a local discrete gradient space.

For each element $T\in \T_h$, denote by $Q_{0}$ the $L^2$ projection
from $L^2(T)$ onto $P_k(T)$. Analogously, for each edge or flat face
$e\in {\cal E}_h$, let $Q_{b}$ be the $L^2$ projection operator from
$L^2(e)$ onto $P_k(e)$. Denote by $\bbQ_h$ the $L^2$ projection onto
the local discrete gradient space $G_{k-1}(T)$. Recall that $V$ is
the weak function space as defined by (\ref{vspace}). We define a
projection operator $Q_h: V \to V_h$ as follows
\begin{equation}\label{Qh-def}
Q_h v:=\{Q_0v_0, Q_bv_b\},\qquad\forall\ v=\{v_0,v_b\}\in V.
\end{equation}

\begin{lemma}
Let $Q_h$ be the projection operator defined as in (\ref{Qh-def}).
Then, on each element $T\in\T_h$, we have
\begin{equation}\label{key}
\nabla_w (Q_h \phi) = \bbQ_h (\nabla \phi),\quad\forall \phi\in
H^1(\Omega).
\end{equation}
\end{lemma}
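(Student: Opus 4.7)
The plan is to verify the identity by testing both sides against an arbitrary $q \in G_{k-1}(T)$ and using the defining property of each projection. Since $\nabla_w(Q_h\phi)$ and $\mathbb{Q}_h(\nabla\phi)$ both live in the finite-dimensional space $G_{k-1}(T)$, it suffices to show that their $L^2(T)$ inner products with every $q \in G_{k-1}(T)$ agree.

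First I would apply the definition \eqref{discrete-weak-divergence-element} with $v = Q_h\phi = \{Q_0\phi, Q_b\phi\}$ and $r = k-1$, obtaining
\begin{equation*}
(\nabla_w(Q_h\phi), q)_T = -(Q_0\phi, \nabla\cdot q)_T + \langle Q_b\phi, q\cdot\bn\rangle_{\partial T}
\end{equation*}
for any $q \in G_{k-1}(T)$. The key observation is that $\nabla\cdot q \in P_{k-2}(T) \subset P_k(T)$ and, on each edge/face $e \subset \partial T$, $q\cdot\bn|_e$ is a polynomial of degree at most $k-1$, hence lies in $P_k(e)$. Consequently, the defining orthogonality of the $L^2$ projections $Q_0$ and $Q_b$ lets me strip them off:
\begin{equation*}
(Q_0\phi, \nabla\cdot q)_T = (\phi, \nabla\cdot q)_T, \qquad \langle Q_b\phi, q\cdot\bn\rangle_{\partial T} = \langle \phi, q\cdot\bn\rangle_{\partial T}.
\end{equation*}

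Next I would invoke integration by parts, which is justified because $\phi \in H^1(T)$ and $q$ is a smooth vector polynomial, to get
\begin{equation*}
-(\phi, \nabla\cdot q)_T + \langle \phi, q\cdot\bn\rangle_{\partial T} = (\nabla\phi, q)_T.
\end{equation*}
Finally, since $\mathbb{Q}_h$ is the $L^2$ projection onto $G_{k-1}(T)$ and $q \in G_{k-1}(T)$, we have $(\nabla\phi, q)_T = (\mathbb{Q}_h\nabla\phi, q)_T$. Combining the chain yields $(\nabla_w(Q_h\phi), q)_T = (\mathbb{Q}_h\nabla\phi, q)_T$ for every $q \in G_{k-1}(T)$, and the identity \eqref{key} follows.

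There is no real obstacle here; the argument is essentially the standard commuting-diagram computation between an $L^2$ projection and a dually defined differential operator. The only subtle point worth double-checking is the degree count: one must verify that both $\nabla \cdot q$ on $T$ and $q\cdot\bn$ on each face $e$ fall within the polynomial degrees preserved by $Q_0$ and $Q_b$, respectively. Given that $q$ has degree $k-1$ while the projections are onto degree-$k$ spaces, this inclusion is automatic.
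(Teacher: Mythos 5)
Your proposal is correct and follows essentially the same route as the paper's own proof: test against an arbitrary element of $G_{k-1}(T)$, use the definition of the discrete weak gradient, remove $Q_0$ and $Q_b$ via the $L^2$-projection orthogonality (the degree counts you check are exactly what makes this legitimate), integrate by parts, and invoke the definition of $\bbQ_h$. No gaps; your explicit verification of the polynomial-degree inclusions is a point the paper leaves implicit.
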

\begin{proof}
Using (\ref{discrete-weak-divergence-element}), the integration by
parts and the definitions of $Q_h$ and $\bbQ_h$, we have that for
any $\tau\in G_{k-1}(T)$
\begin{eqnarray*}
(\nabla_w (Q_h \phi),\; \tau)_T &=& -(Q_0 \phi,\; \nabla\cdot\tau)_T
+\langle Q_b \phi,\; \tau\cdot\bn\rangle_{\pT}\\
&=&-(\phi,\; \nabla\cdot\tau)_T + \langle \phi,\; \tau\cdot\bn\rangle_{\partial T}\\
&=&(\nabla \phi,\; \tau)_T=(\bbQ_h(\nabla\phi),\; \tau)_T
\end{eqnarray*}
which implies the desired relation (\ref{key}).
\end{proof}

\medskip
The following lemma provides some estimate for the projection
operators $Q_h$ and $\bbQ_h$. Observe that the underlying mesh
$\T_h$ is assumed to be sufficiently general to allow polygons or
polyhedra. A proof of the lemma can be found in \cite{wy1}. It
should be pointed out that the proof of the lemma requires some
non-trivial technical tools in analysis, which have also been
established in \cite{wy1}.

\begin{lemma}
Let $\T_h$ be a finite element partition of $\Omega$ satisfying the
shape regularity assumption {\bf A1 - A4}.  Then, for any $\phi\in
H^{k+1}(\Omega)$, we have
\begin{eqnarray}
&&\sum_{T\in\T_h} \|\phi-Q_0\phi\|_{T}^2 +\sum_{T\in\T_h}h_T^2
\|\nabla(\phi-Q_0\phi)\|_{T}^2\le C h^{2(k+1)}
\|\phi\|^2_{k+1},\label{Qh}\\
&&\sum_{T\in\T_h} \|a(\nabla\phi-\bbQ_h(\nabla\phi))\|^2_{T} \le
Ch^{2k} \|\phi\|^2_{k+1}.\label{Rh}
\end{eqnarray}
Here and in what follows of this paper, $C$ denotes a generic
constant independent of the meshsize $h$ and the functions in the
estimates.
\end{lemma}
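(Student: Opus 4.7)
The strategy is to reduce the estimate on each polytopal element $T$ to a classical polynomial approximation estimate on the circumscribed simplex $S(T)$ supplied by assumption \textbf{A4}, and then to sum locally with the help of the bounded-overlap property of the family $\{S(T)\}_{T\in\T_h}$.

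The plan for (\ref{Qh}) is as follows. First, I would apply the Bramble--Hilbert lemma on $S(T)$, which is shape regular with diameter $h_{S(T)}\le \gamma_* h_T$, to produce a polynomial $p_T\in P_k(S(T))$ with
\begin{equation*}
\|\phi-p_T\|_{j,S(T)}\le C\, h_T^{k+1-j}\,\|\phi\|_{k+1,S(T)},\qquad j=0,1.
\end{equation*}
Since $T\subset S(T)$ and $Q_0$ is the best $L^2(T)$-approximation onto $P_k(T)$, the $L^2$ part follows from $\|\phi-Q_0\phi\|_T\le \|\phi-p_T\|_T\le\|\phi-p_T\|_{S(T)}$. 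For the $H^1$ part I would split
\begin{equation*}
\nabla(\phi-Q_0\phi)=\nabla(\phi-p_T)+\nabla(p_T-Q_0\phi),
\end{equation*}
control the first term directly by Bramble--Hilbert on $S(T)$, and control the second by an inverse inequality on the polytope $T$ applied to the polynomial $p_T-Q_0\phi\in P_k(T)$ (this inverse estimate under \textbf{A1}--\textbf{A4} is one of the non-trivial tools of \cite{wy1}), followed by the triangle inequality $\|p_T-Q_0\phi\|_T\le \|\phi-p_T\|_T+\|\phi-Q_0\phi\|_T$. Multiplying by $h_T^2$ absorbs the $h_T^{-1}$ from the inverse inequality and produces the factor $h_T^{2(k+1)}$.

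The plan for (\ref{Rh}) is analogous but applied to $\nabla\phi\in [H^k(\Omega)]^d$ and the projection $\bbQ_h$ onto $[P_{k-1}(T)]^d$. The boundedness (\ref{boundedness}) lets me pull the matrix $a$ out as a factor $\Lambda$. Then the Bramble--Hilbert lemma on $S(T)$ yields a vector polynomial $q_T\in [P_{k-1}(S(T))]^d$ with $\|\nabla\phi-q_T\|_{S(T)}\le C\, h_T^{k}\,\|\phi\|_{k+1,S(T)}$, and the $L^2$-optimality of $\bbQ_h$ gives $\|\nabla\phi-\bbQ_h(\nabla\phi)\|_T\le \|\nabla\phi-q_T\|_T\le \|\nabla\phi-q_T\|_{S(T)}$. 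No inverse inequality is needed here since only the $L^2$ estimate is asked for.

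Finally, I would sum over $T\in\T_h$. The right-hand sides involve norms over $S(T)$ rather than $T$, so the crucial mechanism is the final part of \textbf{A4}: each $S(T)$ overlaps only a uniformly bounded number of other $S(T')$'s, hence $\sum_{T}\|\phi\|_{k+1,S(T)}^2\le C\|\phi\|_{k+1,\Omega}^2$. The main obstacle is really the polytopal inverse inequality and the Bramble--Hilbert lemma on a generic shape-regular polytope; both are established in \cite{wy1}, which is why the authors simply cite that reference rather than redo the analysis here.
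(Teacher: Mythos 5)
The paper itself contains no proof of this lemma: it defers entirely to \cite{wy1}, noting only that the argument requires non-trivial technical tools established there. Your sketch reconstructs essentially that argument: reduce to the circumscribed simplex $S(T)$ of assumption \textbf{A4}, use the best-approximation property of the $L^2$ projections $Q_0$ and $\bbQ_h$ together with the Bramble--Hilbert lemma on the shape-regular simplex $S(T)$, treat the gradient term in (\ref{Qh}) by an inverse inequality for polynomials on the polytope $T$ (equivalently, the norm comparison $\|p\|_{S(T)}\le C\|p\|_{T}$ for $p$ polynomial, which is precisely one of the non-trivial tools of \cite{wy1}), pull the matrix $a$ out of (\ref{Rh}) via (\ref{boundedness}), and sum over elements using the bounded-overlap property of the simplices $S(T)$. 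So in route and in the identification of where the real difficulty lies, your proposal matches the proof the paper points to.

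One step needs repair as written: $S(T)$ is a \emph{circumscribed} simplex and in general is not contained in $\Omega$ (certainly not for elements meeting $\partial\Omega$), so the quantities $\|\phi\|_{k+1,S(T)}$ and the final bound $\sum_{T}\|\phi\|^2_{k+1,S(T)}\le C\|\phi\|^2_{k+1,\Omega}$ are not meaningful for $\phi\in H^{k+1}(\Omega)$ alone. The standard fix, used in \cite{wy1}, is to first apply a bounded Sobolev extension $\phi\mapsto\tilde\phi\in H^{k+1}(\mathbb{R}^d)$ with $\|\tilde\phi\|_{k+1}\le C\|\phi\|_{k+1,\Omega}$ (available since $\Omega$ is a Lipschitz polytope), run your Bramble--Hilbert and best-approximation argument with $\tilde\phi$ on $S(T)$, and only then invoke the finite-overlap property to sum. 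With that extension step inserted, your outline is a faithful version of the cited proof.
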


\medskip
Let $T$ be an element with $e$ as an edge.  For any function
$\varphi\in H^1(T)$, the following trace inequality has been proved
to be valid for general meshes satisfying {\bf A1 - A4} (see
\cite{wy1} for details):
\begin{equation}\label{trace}
\|\varphi\|_{e}^2 \leq C \left( h_T^{-1} \|\varphi\|_T^2 + h_T
\|\nabla \varphi\|_{T}^2\right).
\end{equation}
Using (\ref{trace}), we can obtain the following estimates.

\begin{lemma} Assume that $\T_h$ is shape regular. Then for any $w\in H^{k+1}(\Omega)$ and
$v=\{v_0,v_b\}\in V_h$, we have
\begin{eqnarray}
\left|\sum_{T\in\T_h} h_T^{-1}\langle Q_0w-Q_bw,\; v_0-v_b\rangle_\pT\right|&\le&
Ch^k\|w\|_{k+1}\3bar v\3bar,\label{mmm1}\\
\left|\sum_{T\in\T_h} \langle a(\nabla w-\bbQ_h\nabla w)\cdot\bn,\;
v_0-v_b\rangle_\pT\right| &\leq& C h^k\|w\|_{k+1} \3bar
v\3bar.\label{mmm2}
\end{eqnarray}
\end{lemma}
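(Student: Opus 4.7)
The plan is to prove both (\ref{mmm1}) and (\ref{mmm2}) by the same three-ingredient recipe: (i) Cauchy--Schwarz on $\partial T$ with the weights $h_T^{\pm 1/2}$ chosen so that the $v_0-v_b$ factor can be absorbed into the triple-bar norm $\3bar v\3bar$; (ii) the trace inequality (\ref{trace}) to pass from $\partial T$ to $T$; and (iii) the approximation estimates (\ref{Qh}) and (\ref{Rh}) to close the bound. The main difference between the two inequalities is that (\ref{mmm1}) benefits from a clean orthogonality that removes the projection $Q_b$, whereas (\ref{mmm2}) does not, and so requires an extra ingredient, namely an $H^1$-type approximation estimate for $\bbQ_h$.

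For (\ref{mmm1}), the first step I would take is to observe that on each edge/face $e\subset\pT$ the restrictions $v_0|_e$ and $v_b|_e$ are polynomials of degree at most $k$, so $(v_0-v_b)|_e\in P_k(e)$. Since $Q_b$ is the $L^2$ projection onto $P_k(e)$, this gives $\langle Q_b w-w,\,v_0-v_b\rangle_e=0$, and hence the left-hand side of (\ref{mmm1}) equals $|\sum_T h_T^{-1}\langle Q_0 w-w,\,v_0-v_b\rangle_{\pT}|$. Cauchy--Schwarz with weights $h_T^{-1/2}$ on each side then bounds this by
$$
\Bigl(\sum_T h_T^{-1}\|Q_0 w-w\|_{\pT}^2\Bigr)^{1/2}\Bigl(\sum_T h_T^{-1}\|v_0-v_b\|_{\pT}^2\Bigr)^{1/2},
$$
the second factor being controlled by $\3bar v\3bar$. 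Applying the trace inequality (\ref{trace}) to $\varphi=w-Q_0 w$ and then invoking (\ref{Qh}) reduces the first factor to $C h^k\|w\|_{k+1}$, delivering the claim.

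For (\ref{mmm2}) the orthogonality trick is unavailable because $a$ is neither constant nor polynomial, so I would apply Cauchy--Schwarz directly with weights $h_T^{1/2}$ and $h_T^{-1/2}$, obtaining
$$
\Bigl(\sum_T h_T\|a(\nabla w-\bbQ_h\nabla w)\|_{\pT}^2\Bigr)^{1/2}\Bigl(\sum_T h_T^{-1}\|v_0-v_b\|_{\pT}^2\Bigr)^{1/2}.
$$
The second factor is again absorbed into $\3bar v\3bar$. For the first, the boundedness hypothesis (\ref{boundedness}) lets me pull out the constant $\Lambda$, and the trace inequality (\ref{trace}) applied componentwise to $\nabla w-\bbQ_h\nabla w$ yields
$$
h_T\|\nabla w-\bbQ_h\nabla w\|_{\pT}^2 \le C\|\nabla w-\bbQ_h\nabla w\|_T^2 + C h_T^2\|\nabla(\nabla w-\bbQ_h\nabla w)\|_T^2.
$$
Summing over $T$, the zeroth-order term is handled by (\ref{Rh}), and the first-order term is the $H^1$-norm approximation error of the $L^2$ projection $\bbQ_h$ acting on $\nabla w\in H^k$, which scales like $h^{2(k-1)}\|w\|_{k+1}^2$. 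Together these produce the optimal $h^{2k}$ scale.

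The hard part, in my view, is the $H^1$-seminorm estimate for $\nabla w-\bbQ_h\nabla w$ on a general shape-regular polytope: on simplices this is a textbook Bramble--Hilbert argument, but on arbitrary polytopes it relies on the same non-trivial analytic machinery (extension operators, Poincar\'e inequalities on star-shaped polytopes, scaled trace inequalities built via the pyramid construction in assumption \textbf{A3}) that already underpins (\ref{Qh})--(\ref{Rh}); I would therefore invoke the tools of \cite{wy1} exactly as the preceding lemma does, rather than reproduce them here. Everything else is bookkeeping with Cauchy--Schwarz, the trace inequality (\ref{trace}), and the orthogonality of $Q_b$.
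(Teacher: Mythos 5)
Your proposal is correct and follows essentially the same route as the paper's proof: the orthogonality of $Q_b$ to replace $Q_bw$ by $w$ in (\ref{mmm1}), weighted Cauchy--Schwarz to absorb $\sum_T h_T^{-1}\|v_0-v_b\|_{\pT}^2$ into $\3bar v\3bar^2$, the trace inequality (\ref{trace}), and the projection estimates (\ref{Qh})--(\ref{Rh}). The only difference is cosmetic: you spell out the $H^1$-type approximation bound for $\bbQ_h\nabla w$ (delegated to \cite{wy1}) that the paper leaves implicit in its one-line appeal to (\ref{trace}) and (\ref{Rh}).
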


\begin{proof}
Using the definition of $Q_h$, (\ref{trace}), and (\ref{Qh}), we
have
\begin{eqnarray*}
&&\left|\sum_{T\in\T_h} h_T^{-1}\langle Q_0w-Q_bw,\; v_0-v_b\rangle_\pT\right|
= \left|\sum_{T\in\T_h} h_T^{-1} \langle Q_0w-w,\; v_0-v_b\rangle_\pT\right|\\
&&\le C\left(\sum_{T\in\T_h}(h_T^{-2}\|Q_0w-w\|_T^2+
\|\nabla (Q_0w-w)\|_T^2)\right)^{1/2}\left(\sum_{T\in\T_h}h_T^{-1}\|v_0-v_b\|^2_{\pT}\right)^{1/2}\\
&&\le Ch^k\|w\|_{k+1}\3bar v\3bar.
\end{eqnarray*}
Similarly, it follows from (\ref{trace}) and (\ref{Rh}) that
\begin{eqnarray*}
&&\left|\sum_{T\in\T_h}\langle a(\nabla w-\bbQ_h\nabla w)\cdot\bn,\;
v_0-v_b\rangle_\pT\right|\\
&&\le \left(\sum_{T\in\T_h}h_T\|a(\nabla w-\bbQ_h\nabla
w)\|_{\pT}^2\right)^{1/2}
\left(\sum_{T\in\T_h}h_T^{-1}\|v_0-v_b\|^2_{\pT}\right)^{1/2}\\
&&\le Ch^k\|w\|_{k+1}\3bar v\3bar.
\end{eqnarray*}
This completes the proof.
\end{proof}

\section{On Mass Conservation}\label{Section:wg-massconservation}

The second order elliptic equation (\ref{pde}) can be rewritten in a
conservative form as follows:
$$
\nabla \cdot q = f, \quad q=-a\nabla u.
$$
Let $T$ be any control volume. Integrating the first equation over
$T$ yields the following integral form of mass conservation:
\begin{equation}\label{conservation.01}
\int_{\partial T} q\cdot \bn ds = \int_T f dT.
\end{equation}
We claim that the numerical approximation from the weak Galerkin
finite element method (\ref{wg}) for (\ref{pde}) retains the mass
conservation property (\ref{conservation.01}) with an appropriately
defined numerical flux $q_h$. To this end, for any given $T\in {\cal
T}_h$, we chose in (\ref{wg}) a test function $v=\{v_0, v_b=0\}$ so
that $v_0=1$ on $T$ and $v_0=0$ elsewhere. It follows from
(\ref{wg}) that
\begin{equation}\label{mass-conserve.08}
\int_T a\nabla_{w} u_h\cdot \nabla_{w}v dT +\rho
h_T^{-1}\int_{\partial T} (u_0-u_b)ds = \int_T f dT.
\end{equation}
Recall that $\bbQ_h$ is the local $L^2$ projection onto
$[P_{k-1}(T)]^d$. Using the definition
(\ref{discrete-weak-divergence-element}) for $\nabla_{w}v$ one
arrives at
\begin{eqnarray}
\int_T a\nabla_{w} u_h\cdot \nabla_{w}v dT &=& \int_T
\bbQ_h(a\nabla_{w} u_h)\cdot \nabla_{w}v dT \nonumber\\
&=& - \int_T \nabla\cdot \bbQ_h(a\nabla_{w} u_h) dT \nonumber\\
&=& - \int_{\partial T} \bbQ_h(a\nabla_{w}u_h)\cdot\bn ds.
\label{conserv.88}
\end{eqnarray}
Substituting (\ref{conserv.88}) into (\ref{mass-conserve.08}) yields
\begin{equation}\label{mass-conserve.09}
\int_{\partial T} \left\{-\bbQ_h\left(a\nabla_{w}u_h\right)+\rho
h_T^{-1}(u_0-u_b)\bn\right\}\cdot\bn ds = \int_T f dT,
\end{equation}
which indicates that the weak Galerkin method conserves mass with a
numerical flux given by
$$
q_h = - \bbQ_h\left(a\nabla_{w}u_h\right)+\rho h_T^{-1}(u_0-u_b)\bn.
$$

Next, we verify that the normal component of the numerical flux,
namely $q_h\cdot\bn$, is continuous across the boundary of each
element $T$. To this end, let $e$ be an interior edge/face shared by
two elements $T_1$ and $T_2$. Choose a test function $v=\{v_0,v_b\}$
so that $v_0\equiv 0$ and $v_b=0$ everywhere except on $e$. It
follows from (\ref{wg}) that
\begin{eqnarray}\label{mass-conserve.108}
\int_{T_1\cup T_2} a\nabla_{w} u_h\cdot \nabla_{w}v dT & & -\rho
h_{T_1}^{-1}\int_{\partial T_1\cap e} (u_0-u_b)|_{T_1}v_bds \\
& &- \rho h_{T_2}^{-1}\int_{\partial T_2\cap e}
(u_0-u_b)|_{T_2}v_bds\nonumber\\
& & =0.\nonumber
\end{eqnarray}
Using the definition of weak gradient
(\ref{discrete-weak-divergence-element}) we obtain
\begin{eqnarray*}
\int_{T_1\cup T_2} a\nabla_{w} u_h\cdot \nabla_{w}v dT&=&
\int_{T_1\cup T_2} \bbQ_h(a\nabla_{w} u_h)\cdot \nabla_{w}v dT\\
&=& \int_e\left(\bbQ_h(a\nabla_{w} u_h)|_{T_1}\cdot\bn_1 +
\bbQ_h(a\nabla_{w} u_h)|_{T_2}\cdot\bn_2\right)v_b ds,
\end{eqnarray*}
where $\bn_i$ is the outward normal direction of $T_i$ on the edge
$e$. It is clear that $\bn_1+\bn_2=0$. Substituting the above
equation into (\ref{mass-conserve.108}) yields
\begin{eqnarray*}
\int_e\left(-\bbQ_h(a\nabla_{w} u_h)|_{T_1}+\rho
h_{T_1}^{-1}(u_0-u_b)|_{T_1}\bn_1\right)\cdot\bn_1 v_bds\\
=-\int_e \left(-\bbQ_h(a\nabla_{w} u_h)|_{T_2}+\rho h_{T_2}^{-1}
(u_0-u_b)|_{T_2}\bn_2\right)\cdot\bn_2 v_bds,
\end{eqnarray*}
which shows the continuity of the numerical flux $q_h$ in the normal
direction.

\section{Existence and Boundedness of WG
Solutions}\label{Section:existence}

Let $\phi\in V_h$ be any weak finite element function. A linearized
version of (\ref{wg}) seeks $u_h=\{u_0,\;u_b\}\in V_h$ satisfying
both $u_b= Q_b g$ on $\partial \Omega$ and the following equation:
\begin{equation}\label{wg-linearized}
\aa_s(\phi;u_h,\;v)=(f,\;v_0), \quad\forall\ v=\{v_0,\; v_b\}\in
V_h^0.
\end{equation}
It is easy to see that, for any fixed $\phi\in V_h$, the bilinear
form $\aa_s(\phi;\cdot,\cdot)$ is symmetric and positive definite in
the weak finite element space $V_h$. Thus, one may introduce a norm
in $V_h$ as follows
\begin{equation}\label{3barnorm}
\3bar v\3bar_\phi:=\sqrt{\aa_s(\phi;v,\;v)},\qquad v\in V_h.
\end{equation}
The assumptions (\ref{ellipticity}) and (\ref{boundedness}) on the
matrix coefficient $a=a(x,\eta,p)$ imply that the norm $\3bar
\cdot\3bar_\phi$ are uniformly equivalent for all $\phi$. In
particular, we shall use the norm arising from $\phi=0$ and denote
the corresponding norm by
$$
\3bar \cdot\3bar:=\3bar \cdot\3bar_0.
$$
The trip-bar norm $\3bar \cdot\3bar$ is an $H^1$-equivalence for
finite element functions with vanishing boundary value. Moreover,
the following Poincar\'{e}-type inequality holds true for functions
in $V_{h}^0$.

\medskip
\begin{lemma}
Assume that the finite element partition $\T_h$ is shape regular.
Then, there exists a constant $C$ independent of the meshsize $h$
such that
\begin{eqnarray}\label{eq:discretepoincare1}
\| v_0 \| &\leq C \3bar v\3bar, \qquad \forall \ v=\{v_0,v_b\}\in
V_{h}^0.
\end{eqnarray}
\end{lemma}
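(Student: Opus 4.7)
My approach is a duality argument in the spirit of the Aubin--Nitsche trick, combined with the weak-gradient identity to bridge between the discontinuous finite-element function $v_0$ and the piecewise-defined $\nabla_w v$. First, I would introduce the auxiliary Poisson problem: find $\psi\in H^2(\Omega)\cap H_0^1(\Omega)$ such that $-\Delta\psi = v_0$ in $\Omega$ and $\psi=0$ on $\partial\Omega$, invoking standard $H^2$-elliptic regularity to obtain $\|\psi\|_2\le C\|v_0\|$. Expanding $\|v_0\|^2 = \sum_T(v_0,-\Delta\psi)_T$ and integrating by parts element-wise gives
$$
\|v_0\|^2 = \sum_T (\nabla v_0,\nabla\psi)_T - \sum_T \langle v_0,\nabla\psi\cdot\bn\rangle_{\pT}.
$$
Since $\nabla\psi\cdot\bn$ is single-valued across interior faces and $v_b$ is single-valued with $v_b=0$ on $\partial\Omega$, the contribution of $v_b$ on the boundary sum telescopes to zero, so I can replace $v_0$ by $v_0-v_b$ on $\pT$.

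Second, to connect the volume term to $\nabla_w v$, I would exploit that $v_0|_T\in P_k(T)$ implies $\nabla v_0\in G_{k-1}(T)$, so the $L^2$-projection $\bbQ_h$ leaves it invariant:
$$
(\nabla v_0,\nabla\psi)_T = (\nabla v_0,\bbQ_h\nabla\psi)_T.
$$
Then integration by parts with $\bbQ_h\nabla\psi\in G_{k-1}(T)$ together with the definition (\ref{discrete-weak-divergence-element}) of the discrete weak gradient yields
$$
(\nabla v_0,\bbQ_h\nabla\psi)_T = (\nabla_w v,\bbQ_h\nabla\psi)_T + \langle v_0-v_b,\bbQ_h(\nabla\psi)\cdot\bn\rangle_{\pT}.
$$
Summing and combining with the previous identity produces the master representation
$$
\|v_0\|^2 = \sum_T (\nabla_w v,\bbQ_h\nabla\psi)_T - \sum_T \langle v_0-v_b,(\nabla\psi-\bbQ_h\nabla\psi)\cdot\bn\rangle_{\pT}.
$$

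Third, I would bound the two terms on the right. The first is controlled by Cauchy--Schwarz and the uniform ellipticity of $a(x,0,0)$, yielding $\le C\,\3bar v\3bar\,\|\psi\|_2$. The second is handled by Cauchy--Schwarz against the jump-penalty part of $\3bar v\3bar$, combined with the trace inequality (\ref{trace}) and the approximation estimate (\ref{Rh}) specialized to $k=1$, producing
$$
\Bigl(\sum_T h_T\|\nabla\psi-\bbQ_h\nabla\psi\|_{\pT}^2\Bigr)^{1/2} \le C h\|\psi\|_2.
$$
Combining both estimates with $\|\psi\|_2\le C\|v_0\|$ gives $\|v_0\|^2\le C\,\3bar v\3bar\,\|v_0\|$, which is the desired inequality.

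The main technical obstacle is the boundary estimate of $\nabla\psi-\bbQ_h\nabla\psi$: on general polytopes the $L^2$-approximation of $\bbQ_h$ on $\pT$ is delicate because the trace inequality (\ref{trace}) must be used together with an inverse estimate to absorb the $h_T^{-1}$ factor, and the argument relies on the shape regularity assumptions \textbf{A1}--\textbf{A4} in precisely the form established in \cite{wy1}. Everything else is essentially bookkeeping once this approximation result is granted.
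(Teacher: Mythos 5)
Your overall strategy coincides with the paper's: represent $\|v_0\|^2$ through an auxiliary potential whose divergence (or negative Laplacian) equals $v_0$, integrate by parts element by element, insert $v_b$ using the single-valuedness of the normal flux and $v_b=0$ on $\partial\Omega$, use the invariance $(\nabla v_0,\nabla\psi)_T=(\nabla v_0,\bbQ_h\nabla\psi)_T$ together with the definition (\ref{discrete-weak-divergence-element}) to pass to $\nabla_w v$, and bound the resulting two terms by Cauchy--Schwarz and the boundary estimate of type (\ref{mmm2}) (your boundary bound is exactly (\ref{mmm2}) with $w=\psi$, $a=1$, $k=1$, so you need not re-derive it, and no inverse estimate is involved --- only the trace inequality (\ref{trace}) and the projection estimates of \cite{wy1}).

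However, there is a genuine gap at the very first step: you invoke ``standard $H^2$-elliptic regularity'' for $-\Delta\psi=v_0$, $\psi\in H^1_0(\Omega)$, to get $\|\psi\|_2\le C\|v_0\|$. The lemma is stated for an arbitrary polytopal domain $\Omega$ under mesh shape regularity only; no convexity of $\Omega$ is assumed, and for a non-convex polygon/polyhedron (e.g.\ an L-shaped domain) the solution of the homogeneous Dirichlet problem is in general \emph{not} in $H^2(\Omega)$, so the bound $\|\psi\|_2\le C\|v_0\|$ fails and your chain of estimates collapses at the last step, where $\|\psi\|_2$ must be absorbed into $\|v_0\|$. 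The paper avoids this by never solving the Dirichlet problem on $\Omega$: it extends $v_0$ by zero to a convex domain $\tilde\Omega\supset\Omega$, solves $\Delta\Psi=v_0$ there (where full $H^2$-regularity always holds), and uses only the restriction $\bq=\nabla\Psi\in[H^1(\Omega)]^d$ with $\nabla\cdot\bq=v_0$ in $\Omega$ and $\|\bq\|_1\le C\|v_0\|$. If you replace your Dirichlet problem on $\Omega$ by this extension device (or by any other construction of a vector field $\bq\in[H^1(\Omega)]^d$ with $\nabla\cdot\bq=v_0$ and $\|\bq\|_1\le C\|v_0\|$), the remainder of your argument goes through verbatim; note that no boundary condition on $\psi$ (or $\bq$) is actually needed, since the boundary terms are killed by $v_b=0$ on $\partial\Omega$, not by $\psi=0$.
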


\begin{proof}
For any $v=\{v_0,v_b\}\in V_{h}^0$, let $\bq\in [H^1(\Omega)]^d$ be
such that $\nabla\cdot\bq = v_0$ and $\|\bq\|_1 \leq C \|v_0\|$. To
see an existence of such a function $\bq$, one may first extend
$v_0$ by zero to a convex domain $\tilde\Omega$ which contains
$\Omega$, and then consider the Poisson equation $\Delta \Psi=v_0$
on the enlarged domain $\tilde \Omega$ and set $\bq=\nabla\Psi$. The
required properties of $\bq$ follow immediately from the full
regularity of the Poisson equation on convex domains.

Recall that $\bbQ_h$ is the $L^2$ projection to the space of
piecewise polynomials of degree $k-1$. Thus,
\begin{eqnarray}\nonumber
\|v_0\|^2&=& \sum_{T\in\T_h}(v_0,\nabla\cdot\bq)_T\\
&=& \sum_{T\in\T_h}\left( \langle v_0,\bq\cdot\bn \rangle_{\partial
T} -
(\nabla v_0, \bq)_T\right)\nonumber\\
&=& \sum_{T\in\T_h}\left( \langle v_0,\bq\cdot\bn \rangle_{\partial
T} -
(\nabla v_0, \bbQ_h\bq)_T\right)\nonumber \\
&=& \sum_{T\in\T_h}\left( (v_0,\nabla\cdot(\bbQ_h\bq))_T-\langle
v_0,(\bbQ_h\bq)\cdot\bn \rangle_{\partial T} +\langle
v_0,\bq\cdot\bn
\rangle_{\partial T}\right)\nonumber\\
&=& \sum_{T\in\T_h}\left( (v_0,\nabla\cdot(\bbQ_h\bq))_T-\langle
v_0,(\bbQ_h\bq)\cdot\bn \rangle_{\partial T} +\langle
v_0-v_b,\bq\cdot\bn \rangle_{\partial
T}\right),\label{eq:poincare-1}
\end{eqnarray}
where we have used the continuity of $\bq\cdot\bn$ across each
element edge/face and the fact that $v_b=0$ on $\partial\Omega$.
Observe that the definition (\ref{discrete-weak-divergence-element})
of the discrete weak gradient implies
$$
(v_0,\nabla\cdot(\bbQ_h\bq))_T=-(\nabla_w v, \bbQ_h\bq)_T+\langle
v_b,(\bbQ_h\bq)\cdot\bn \rangle_{\partial T}.
$$
Substituting the above identity into (\ref{eq:poincare-1}) yields
\begin{eqnarray}
\|v_0\|^2&=&\sum_{T\in\T_h}\left(- (\nabla_w v,\bbQ_h\bq)_T +\langle
v_0-v_b,(\bq-\bbQ_h\bq)\cdot\bn \rangle_{\partial
T}\right).\label{eq:poincare-2}
\end{eqnarray}
Using (\ref{mmm2}) with $w=\Psi, a=1$, and $k=1$, we have
$$
\left| \sum_{T\in\T_h} \langle v_0-v_b,(\bq-\bbQ_h\bq)\cdot\bn
\rangle_{\partial T} \right| \leq C h \|\Psi\|_2 \3bar v\3bar.
$$
Substituting the above estimate into (\ref{eq:poincare-2}) we arrive
at
\begin{eqnarray*}
\|v_0\|^2 &\leq& \|\nabla_w v\| \ \|\bbQ_h\bq\| + Ch\|\Psi\|_2 \3bar
v\3bar\\
&\leq&\|\nabla_w v\| \ \|\bq\| + Ch\|\Psi\|_2 \3bar
v\3bar\\
&\leq& C \3bar\nabla_w v\3bar \ \|\Psi\|_2\\
&\leq& C \3bar \nabla_w v\3bar \ \|v_0\|,
\end{eqnarray*}
where we have used the fact that $\bq=\nabla\Psi$ and
$\|\Psi\|_2\leq C \|v_0\|$ for some constant $C$. This completes the
proof of the lemma.
\end{proof}
\medskip

Denote by $B_{g,h}$ the set of finite element functions satisfying
the boundary condition $Q_b g$; i.e.,
$$
B_{g,h}:=\{v=\{v_0, v_b\}\in V_h \mbox{ such that $v_b=Q_b g$ on
$\partial\Omega$}\}.
$$
The following is a result on the solution uniqueness and existence
for the linearized problem (\ref{wg-linearized}).

\begin{lemma}
The weak Galerkin finite element scheme (\ref{wg-linearized}) has
one and only one solution. Moreover, there exists a constant $C$
such that the solution of (\ref{wg-linearized}) has the following
boundedness estimate
\begin{equation}\label{desired_estimate}
\3bar u_h\3bar_\phi \leq C(\|f\|+\inf_{\psi\in
B_{g,h}}\3bar\psi\3bar).
\end{equation}
\end{lemma}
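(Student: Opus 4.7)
The plan is to treat (\ref{wg-linearized}) as a finite-dimensional linear system with prescribed Dirichlet data, obtain existence and uniqueness from the positive definiteness of $\aa_s(\phi;\cdot,\cdot)$ on $V_h^0$, and then derive the a priori bound by decomposing $u_h$ into a boundary lifting plus a zero-boundary correction.

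First, for any fixed $\phi\in V_h$, the form $\aa_s(\phi;\cdot,\cdot)$ is symmetric and positive definite on $V_h$. Indeed, the ellipticity assumption (\ref{ellipticity}) together with the stabilizer $s(\cdot,\cdot)$ gives $\aa_s(\phi;v,v)\ge c\,\3bar v\3bar^2$, where the constant $c>0$ is independent of $\phi$ because of the uniform equivalence of the $\3bar\cdot\3bar_\phi$ norms noted after (\ref{3barnorm}). Since the problem is finite dimensional and linear in $u_h$ with boundary data $u_b=Q_bg$ fixed, existence and uniqueness reduce to showing that the homogeneous problem ($f=0$, $g=0$, so $u_h\in V_h^0$) admits only the trivial solution. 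Testing with $v=u_h$ and invoking positive definiteness yields $\3bar u_h\3bar=0$, and then the Poincaré-type inequality (\ref{eq:discretepoincare1}) forces $u_0=0$, hence $u_h=0$ in $V_h^0$.

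For the boundedness estimate, fix any $\psi\in B_{g,h}$ and set $w:=u_h-\psi$. Since $u_b=Q_bg=\psi_b$ on $\partial\Omega$, we have $w\in V_h^0$, and (\ref{wg-linearized}) becomes
\begin{equation*}
\aa_s(\phi;w,v)=(f,v_0)-\aa_s(\phi;\psi,v),\qquad \forall\,v\in V_h^0.
\end{equation*}
Choose $v=w$. The Cauchy--Schwarz inequality applied to the inner product $\aa_s(\phi;\cdot,\cdot)$ gives $|\aa_s(\phi;\psi,w)|\le\3bar\psi\3bar_\phi\3bar w\3bar_\phi$, while the Poincaré-type inequality (\ref{eq:discretepoincare1}) combined with the uniform norm equivalence yields $|(f,w_0)|\le\|f\|\,\|w_0\|\le C\|f\|\3bar w\3bar\le C\|f\|\3bar w\3bar_\phi$. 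Dividing by $\3bar w\3bar_\phi$ gives
\begin{equation*}
\3bar w\3bar_\phi\le C\bigl(\|f\|+\3bar\psi\3bar_\phi\bigr)\le C\bigl(\|f\|+\3bar\psi\3bar\bigr),
\end{equation*}
where the final step again uses the uniform equivalence of $\3bar\cdot\3bar_\phi$ and $\3bar\cdot\3bar$. A triangle inequality then produces $\3bar u_h\3bar_\phi\le\3bar w\3bar_\phi+\3bar\psi\3bar_\phi\le C(\|f\|+\3bar\psi\3bar)$, and taking the infimum over $\psi\in B_{g,h}$ gives (\ref{desired_estimate}).

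The only delicate point is keeping the constants independent of $\phi$: one must be careful to invoke the uniform equivalence of $\3bar\cdot\3bar_\phi$ and $\3bar\cdot\3bar$ (guaranteed by the bounds (\ref{ellipticity})--(\ref{boundedness})) whenever swapping between the two norms, so that the final constant $C$ in (\ref{desired_estimate}) does not depend on $\phi$. Everything else is standard Lax--Milgram style manipulation in a finite-dimensional setting.
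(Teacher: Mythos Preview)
Your proof is correct and follows essentially the same strategy as the paper: reduce uniqueness to the homogeneous case and test with $v=u_h$, then for the bound decompose $u_h=\psi+(u_h-\psi)$ with $u_h-\psi\in V_h^0$, test with this difference, and combine Cauchy--Schwarz, Poincar\'e, and the triangle inequality. The one notable difference is in the uniqueness step: the paper argues directly from $\nabla_w u_h=0$ and $u_0=u_b$ on $\partial T$ that $\nabla u_0=0$, hence $u_0$ is piecewise constant, and then propagates $u_b=0$ from the boundary inward; you instead shortcut this by invoking the already-established Poincar\'e inequality (\ref{eq:discretepoincare1}) to get $u_0=0$ from $\3bar u_h\3bar=0$, after which $u_b=0$ follows from the stabilizer term. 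Your route is slightly slicker since it reuses a lemma already in hand, while the paper's is more self-contained; both are valid.
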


\smallskip

\begin{proof}
It suffices to show that the solution of (\ref{wg-linearized}) is
trivial if the data is homogenous; i.e., if $f=g=0$. To this end,
assume that the data is homogeneous. By taking $v=u_h$ in
(\ref{wg-linearized}) we arrive at
\[
(\tilde a \nabla_w u_h,\;\nabla_w u_h)+\rho\sum_{T\in\T_h}
h_T^{-1}\langle u_0-u_b,\; u_0-u_b\rangle_\pT=0,
\]
where $\tilde a = a(x, \phi,\nabla_w\phi)$. This implies that
$\nabla_w u_h=0$ on each element $T$ and $u_0=u_b$ on $\pT$. It
follows from $\nabla_w u_h=0$ and
(\ref{discrete-weak-divergence-element}) that for any $q\in
[P_{k-1}(T)]^d$ we have
\begin{eqnarray*}
0&=&(\nabla_w u_h,\;q)_T\\
&=&-(u_0,\;\nabla\cdot q)_T+\langle u_b,\;q\cdot\bn\rangle_\pT\\
&=&(\nabla u_0,\; q)_T-\langle u_0-u_b,\;
q\cdot\bn\rangle_\pT\\
&=&(\nabla u_0,\; q)_T.
\end{eqnarray*}
Letting $q=\nabla u_0$ in the above equation yields $\nabla u_0=0$
on $T\in {\cal T}_h$. It follows that $u_0=const$ on any $T\in\T_h$.
This, together with the fact that $u_0=u_b$ on $\partial T$ and
$u_b=0$ on $\partial\Omega$, implies $u_0=u_b=0$.

For any $\psi\in B_{g,h}$, the difference $\tilde u_h=u_h-\psi$ is a
function in $V_h^0$ satisfying the following equation
\begin{eqnarray*}
\aa_s(\phi;\tilde u_h,\;v)=(f,\;v_0)-\aa_s(\phi;\psi,\;v),
\quad\forall\ v=\{v_0,\; v_b\}\in V_h^0.
\end{eqnarray*}
By letting $v=\tilde u_h$ we arrive at
\begin{eqnarray*}
\3bar\tilde u_h\3bar_\phi^2=(f,\;\tilde u_h)-\aa_s(\phi;\psi,\tilde
u_h).
\end{eqnarray*}
Thus, it follows from the Poincar\'e inequality
(\ref{eq:discretepoincare1}) and the boundedness of
$\aa_s(\phi;\cdot,\cdot)$ that
$$
\3bar\tilde u_h\3bar_\phi^2 \leq C(\|f\|+\3bar\psi\3bar) \
\3bar\tilde u_h\3bar_\phi,
$$
which, together with $u_h=\tilde u_h+\psi$ and the usual triangle
inequality, implies the designed estimate (\ref{desired_estimate}).
This completes the proof of the lemma.
\end{proof}
\medskip

For the general nonlinear elliptic equation (\ref{pde}), we have the
following result on solution existence.
\begin{lemma}
There exists a weak finite element function $u_h\in V_h$ satisfying
the weak Galerkin finite element scheme (\ref{wg}). Moreover, the WG
solution satisfies the following estimate:
\begin{equation}\label{wg_estimate}
\3bar u_h\3bar_\phi \leq C(\|f\|+\inf_{\psi\in
B_{g,h}}\3bar\psi\3bar).
\end{equation}
\end{lemma}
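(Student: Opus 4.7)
The plan is to apply the Leray--Schauder fixed point theorem to a map built from the solution operator of the linearized scheme (\ref{wg-linearized}). First, fix any lifting $\psi_g\in B_{g,h}$ of the boundary data and write $u_h=u_h^0+\psi_g$ with $u_h^0\in V_h^0$, so that (\ref{wg}) becomes: find $u_h^0\in V_h^0$ such that
\[
\aa_s(u_h^0+\psi_g;\,u_h^0,\,v)=(f,v_0)-\aa_s(u_h^0+\psi_g;\,\psi_g,\,v),\qquad \forall v\in V_h^0.
\]
Define $T\colon V_h^0\to V_h^0$ by $T(\phi)=w$, where $w\in V_h^0$ is the unique solution (provided by the previous lemma on (\ref{wg-linearized})) of
\[
\aa_s(\phi+\psi_g;\,w,\,v)=(f,v_0)-\aa_s(\phi+\psi_g;\,\psi_g,\,v),\qquad \forall v\in V_h^0.
\]
A fixed point of $T$ yields a solution of (\ref{wg}).

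Next, I would argue that $T$ is continuous. This reduces to continuous dependence of the unique solution of the linear problem on its coefficient matrix $\tilde a(x)=a(x,\phi+\psi_g,\nabla_w(\phi+\psi_g))$, which follows once one accepts the (implicitly assumed) continuity of $a(x,\eta,p)$ in $(\eta,p)$. Since $V_h^0$ is finite-dimensional, continuity automatically upgrades to compactness of $T$, so the structural hypotheses of Leray--Schauder are met.

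The heart of the proof is the a priori bound required by Leray--Schauder. If $u=\sigma T(u)$ for some $\sigma\in[0,1]$, then $u$ satisfies
\[
\aa_s(u+\psi_g;\,u,\,v)=\sigma(f,v_0)-\sigma\,\aa_s(u+\psi_g;\,\psi_g,\,v),\qquad \forall v\in V_h^0.
\]
Testing with $v=u$ and invoking the uniform equivalence of the norms $\3bar\cdot\3bar_\phi$ guaranteed by (\ref{ellipticity})--(\ref{boundedness}), the boundedness of $\aa_s(\phi;\cdot,\cdot)$, the Poincar\'e-type inequality (\ref{eq:discretepoincare1}), and Cauchy--Schwarz, I would derive
\[
\3bar u\3bar^2\le C\|f\|\,\3bar u\3bar+C\3bar\psi_g\3bar\,\3bar u\3bar,
\]
hence $\3bar u\3bar\le C(\|f\|+\3bar\psi_g\3bar)$ uniformly in $\sigma\in[0,1]$. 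Leray--Schauder then delivers a fixed point $u_h^0\in V_h^0$, yielding a WG solution $u_h=u_h^0+\psi_g$. Estimate (\ref{wg_estimate}) then follows from this bound applied to the fixed point (combined with the $\3bar\cdot\3bar_\phi$ equivalence) and minimization over $\psi_g\in B_{g,h}$.

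The principal obstacle is the continuity of $T$: the bare ellipticity and boundedness hypotheses (\ref{ellipticity})--(\ref{boundedness}) do not by themselves guarantee that $w$ depends continuously on $\phi$, so some mild regularity of $a$ in $(\eta,p)$ (typically a Carath\'eodory-type assumption, continuous in $(\eta,p)$ for a.e.\ $x$) must be tacitly invoked. Once this is granted, all remaining steps are routine thanks to the finite-dimensional setting and the already-established theory of the linearized scheme.
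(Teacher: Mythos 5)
Your proposal is correct and follows essentially the same route as the paper: both apply the Leray--Schauder fixed point theorem to the solution operator of the linearized scheme (\ref{wg-linearized}) and obtain the a priori bound for the scaled fixed-point equation by testing with the solution itself, using the uniform equivalence of the norms $\3bar\cdot\3bar_\phi$, the Poincar\'e-type inequality (\ref{eq:discretepoincare1}), and the boundedness of $\aa_s$. The only (harmless) differences are that you homogenize the boundary data with a fixed lifting $\psi_g$ and set the fixed-point map on $V_h^0$ --- which sidesteps the $\sigma$-scaled boundary value $\sigma Q_b g$ that the paper handles through $B_{\sigma g,h}$ --- and that you explicitly flag continuity of $a$ in $(\eta,p)$ as the tacit assumption behind continuity of the map, a point the paper simply asserts.
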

\smallskip

\begin{proof}
We shall use the Leray-Schauder fixed point theorem to prove an
existence of $u_h$ satisfying (\ref{wg}). Recall that one version of
the Leray-Schauder fixed point theorem (see for example Theorem 11.3
in \cite{gilbarg}) asserts that a continuous mapping $F$ in
$\mathbb{R}^n$ into itself has at least one fixed point if there
exists a constant $M$ such that any solution of $\sigma F(w) = w$
with $\sigma\in [0,1]$ must satisfy $\|w\|_{{\mathbb{R}^n}} < M$,
where $\|w\|_{{\mathbb{R}^n}}$ is the norm of $w$ in $\mathbb{R}^n$.

For any $\phi\in V_h$, let $u_\phi\in V_h$ be the solution of the
following linear problem: Find $u_\phi=\{u_0,\;u_b\}\in V_h$
satisfying both $u_b= Q_b g$ on $\partial \Omega$ and the following
equation:
\begin{equation}\label{wg-linearized-inproof}
\aa_s(\phi;u_\phi,\;v)=(f,\;v_0), \quad\forall\ v=\{v_0,\; v_b\}\in
V_h^0.
\end{equation}
Denote by $F(\phi):=u_\phi$ the mapping from $V_h$ into itself. It
is clear that $F$ is a continuous one. Assume that $\xi_h\in V_h$
satisfies the operator equation $\xi_h=\sigma F(\xi_h)$ for some
real number $\sigma\in [0,1]$. This implies that $\xi_h=\sigma Q_bg$
on $\partial\Omega$ and satisfies
\begin{equation}\label{wg-linearized-psi}
\aa_s(\xi_h;\sigma^{-1}\xi_h,\;v)=(f,\;v_0), \quad\forall\
v=\{v_0,\; v_b\}\in V_h^0.
\end{equation}
Multiplying both sides of (\ref{wg-linearized-psi}) by $\sigma$
yields
\begin{equation}\label{wg-linearized-psi-new}
\aa_s(\xi_h;\xi_h,\;v)=(\sigma f,\;v_0), \quad\forall\ v=\{v_0,\;
v_b\}\in V_h^0.
\end{equation}
The estimate (\ref{desired_estimate}) can be used to give the
following estimate for the solution of (\ref{wg-linearized-psi-new})
\begin{equation}\nonumber
\3bar\xi_h\3bar_\phi \leq C\sup_{\sigma\in
[0,1]}(\sigma\|f\|+\inf_{\psi\in B_{\sigma g,h}}\3bar\psi\3bar).
\end{equation}
This shows that all the conditions of the Leray-Schauder fixed point
theorem are satisfied for the mapping $F$. Thus, $F$ admits at least
one fixed point $u_h$ which is easily seen to be the solution of the
WG finite element scheme (\ref{wg}).
\end{proof}

\section{Error Analysis}\label{Section:error-analysis}
The goal of this section is to establish some error estimates for
the WG finite element solution $u_h$ arising from (\ref{wg}). Our
convergence analysis will be established for only the linear case of
(\ref{pde}). In other words, we shall assume that the coefficient
matrix $a=a(x,\eta,p)$ is independent of the variables $\eta$ and
$p$. The error will be measured in two natural norms: the triple-bar
norm as defined in (\ref{3barnorm}) and the standard $L^2$ norm. The
triple bar norm is essentially a discrete $H^1$ norm for the
underlying weak function.

For simplicity of analysis, we assume that the coefficient tensor
$a$ in (\ref{pde}) is a piecewise constant matrix with respect to
the finite element partition $\T_h$. The result can be extended to
variable tensors without any difficulty, provided that the tensor
$a$ is piecewise sufficiently smooth.

\subsection{Error equation} Let $\phi\in H^1(T)$ and $v\in V_h$ be
any finite element function. It follows from (\ref{key}), the
definition of the discrete weak gradient
(\ref{discrete-weak-divergence-element}), and the integration by
parts that
\begin{eqnarray}
(a\nabla_w Q_h\phi,\;\nabla_w v)_T&=&(a \bbQ_h(\nabla\phi),\;\nabla_w v)_T\nonumber\\
&=& -(v_0,\ \nabla\cdot (a \bbQ_h\nabla\phi))_T+\langle v_b,\ (a \bbQ_h\nabla\phi)\cdot\bn\rangle_\pT\nonumber\\
&=&(\nabla v_0,\; a\bbQ_h\nabla\phi)_T-\langle v_0-v_b,\ (a\bbQ_h\nabla\phi)\cdot\bn\rangle_\pT\nonumber\\
&=&(a\nabla\phi,\;\nabla v_0)_T-\l (a\bbQ_h\nabla\phi)\cdot\bn,\
v_0-v_b\r_\pT.\label{j1}
\end{eqnarray}
Testing (\ref{pde}) by using $v_0$ of $v=\{v_0,\;v_b\}\in V_h^0$ we
arrive at
\begin{equation}\label{m1}
\sum_{T\in\T_h}(a\nabla u,\;\nabla v_0)_T-\sum_{T\in\T_h} \langle a\nabla u\cdot\bn,\; v_0-v_b\rangle_\pT=(f,\; v_0),
\end{equation}
where we have used the fact that $\sum_{T\in\T_h}\langle a\nabla
u\cdot\bn,\; v_b\rangle_\pT=0$. By letting $\phi=u$ in (\ref{j1}),
we have from combining (\ref{j1}) and (\ref{m1}) that
\[
\sum_{T\in\T_h} (a\nabla_w Q_hu,\;\nabla_w v)_T=(f,\;v_0)+
\sum_{T\in\T_h}\langle a(\nabla u-\bbQ_h\nabla u)\cdot\bn,\;
v_0-v_b\rangle_\pT.
\]
Adding $s(Q_hu,\ v)$ to both sides of the above equation gives
\begin{equation}\label{j2}
a_s(Q_hu,\ v)=(f,\;v_0)+ \sum_{T\in\T_h} \langle a(\nabla
u-\bbQ_h\nabla u)\cdot\bn,\; v_0-v_b\rangle_\pT +s(Q_hu,\ v).
\end{equation}
Subtracting (\ref{wg}) from (\ref{j2}) yields the following error
equation
\begin{eqnarray}
a_s(e_h,\ v)=\sum_{T\in\T_h} \langle a(\nabla u-\bbQ_h\nabla
u)\cdot\bn,\; v_0-v_b\rangle_\pT+ s(Q_hu,\ v),\quad \forall v\in
V_h^0,\label{ee}
\end{eqnarray}
where
$$
e_h=\{e_0,\;e_b\}:=\{Q_0u-u_0,\;Q_bu-u_b\}
$$
is the error between the WG finite element solution and the $L^2$
projection of the exact solution.

\subsection{Error estimates}
The error equation (\ref{ee}) can be used to derive the following
error estimate for the WG finite element solution.

\begin{theorem} Let $u_h\in V_h$ be the weak Galerkin finite element solution of the problem
(\ref{pde})-(\ref{bc}) arising from (\ref{wg}). Assume that the
exact solution is so regular that $u\in H^{k+1}(\Omega)$. Then,
there exists a constant $C$ such that
\begin{equation}\label{err1}
\3bar u_h-Q_hu\3bar \le Ch^{k}\|u\|_{k+1}.
\end{equation}
\end{theorem}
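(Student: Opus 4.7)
The plan is to run the standard energy argument: substitute the error $e_h=\{Q_0u-u_0,\,Q_bu-u_b\}$ itself as the test function in the error equation (\ref{ee}), and control the two residual terms on the right-hand side using the projection estimates (\ref{mmm1}) and (\ref{mmm2}) already established in the previous lemma.

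First I would verify that $e_h$ is an admissible test function, i.e.\ $e_h\in V_h^0$. On $\partial\Omega$ the WG scheme enforces $u_b=Q_bg$, and since the exact solution satisfies $u=g$ on $\partial\Omega$ we have $Q_bu=Q_bg$, so $e_b=Q_bu-u_b=0$ on $\partial\Omega$. In the linear case treated here, the coefficient $a$ does not depend on the first slot of $\aa_s(\cdot;\cdot,\cdot)$, so $\aa_s(e_h;v,w)=a_s(v,w)$ and in particular $a_s(e_h,e_h)=\3bar e_h\3bar^2$ by the definition (\ref{3barnorm}) of the triple-bar norm.

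Next I would take $v=e_h$ in (\ref{ee}) to obtain
\[
\3bar e_h\3bar^2 \;=\; \sum_{T\in\T_h}\l a(\nabla u-\bbQ_h\nabla u)\cdot\bn,\; e_0-e_b\r_{\pT}\;+\;s(Q_hu,\;e_h).
\]
The first sum on the right is bounded directly by (\ref{mmm2}) with $w=u$, $v=e_h$, yielding $Ch^{k}\|u\|_{k+1}\3bar e_h\3bar$. For the stabilization term, observe that by the definition of $s(\cdot,\cdot)$
\[
s(Q_hu,\,e_h)=\rho\sum_{T\in\T_h}h_T^{-1}\l Q_0u-Q_bu,\;e_0-e_b\r_{\pT},
\]
which is exactly the quantity estimated in (\ref{mmm1}) with $w=u$, $v=e_h$, giving the bound $Ch^{k}\|u\|_{k+1}\3bar e_h\3bar$ as well.

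Combining the two bounds yields $\3bar e_h\3bar^{2}\le C h^{k}\|u\|_{k+1}\3bar e_h\3bar$, and dividing by $\3bar e_h\3bar$ gives the desired estimate (\ref{err1}). There is no real obstacle: the work has already been done in setting up the error equation (\ref{ee}) and in the approximation lemma that supplies (\ref{mmm1})--(\ref{mmm2}); the only point requiring a moment of care is checking that $e_h\in V_h^0$ (so that the error equation may be tested against it) and that in the linear regime the triple-bar norm coincides with $\sqrt{a_s(e_h,e_h)}$ regardless of the first argument of $\aa_s$.
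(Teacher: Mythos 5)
Your proposal is correct and follows essentially the same argument as the paper: test the error equation (\ref{ee}) with $v=e_h$, bound the flux term by (\ref{mmm2}) and the stabilization term by (\ref{mmm1}), then cancel one factor of $\3bar e_h\3bar$. The extra checks you include (that $e_b=0$ on $\partial\Omega$ so $e_h\in V_h^0$, and that in the linear case $\3bar\cdot\3bar$ coincides with $\sqrt{a_s(\cdot,\cdot)}$) are details the paper uses implicitly, so there is no substantive difference.
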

\begin{proof}
By letting $v=e_h$ in (\ref{ee}), we have
\begin{eqnarray}
\3bar e_h\3bar^2&=&\sum_{T\in\T_h} \langle a(\nabla u-\bbQ_h\nabla
u)\cdot\bn,\; e_0-e_b\rangle_\pT+s(Q_hu,\;\ e_h).\label{main}
\end{eqnarray}
It then follows from (\ref{mmm1}) and (\ref{mmm2}) that
\[
\3bar e_h\3bar^2 \le Ch^k\|u\|_{k+1}\3bar e_h\3bar,
\]
which implies (\ref{err1}). This completes the proof.
\end{proof}

\medskip
To obtain an error estimate in the standard $L^2$ norm, we consider
a dual problem that seeks $\Phi\in H_0^1(\Omega)$ satisfying
\begin{eqnarray}
-\nabla\cdot (a \nabla\Phi)&=& e_0\quad
\mbox{in}\;\Omega.\label{dual}
%w&=&0\quad \mbox{on}\; \partial\Omega,\label{dual-BC}
\end{eqnarray}
Assume that the usual $H^{2}$-regularity is satisfied for the dual
problem. This means that there exists a constant $C$ such that
\begin{equation}\label{reg}
\|\Phi\|_2\le C\|e_0\|.
\end{equation}

\begin{theorem} Let $u_h\in V_h$ be the weak Galerkin finite element solution of the problem
(\ref{pde})-(\ref{bc}) arising from (\ref{wg}). Assume that the
exact solution is so regular that $u\in H^{k+1}(\Omega)$. In
addition, assume that the dual problem (\ref{dual}) has the usual
$H^2$-regularity. Then, there exists a constant $C$ such that
\begin{equation}\label{err2}
\|Q_0u-u_0\| \le Ch^{k+1}\|u\|_{k+1}.
\end{equation}
\end{theorem}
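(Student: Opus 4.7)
The plan is a standard duality argument, but tailored to the weak Galerkin setting. I would start from the dual problem (\ref{dual}) with right-hand side $e_0=Q_0u-u_0$ and test it against $e_0$ itself, so that $\|e_0\|^2=-(e_0,\nabla\cdot(a\nabla\Phi))$. After integrating by parts element-by-element, I would obtain
\[
\|e_0\|^2 \;=\; \sum_{T\in\T_h}(a\nabla\Phi,\nabla e_0)_T \;-\;\sum_{T\in\T_h}\langle e_0-e_b,\,a\nabla\Phi\cdot\bn\rangle_{\pT},
\]
using the continuity of $a\nabla\Phi\cdot\bn$ across interior faces together with $e_b=0$ on $\partial\Omega$ (which holds because $u_b=Q_bg=Q_bu$ on $\partial\Omega$) to insert the $e_b$ terms for free.

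The next move is to replay the identity (\ref{j1}) with $\phi=\Phi$ and $v=e_h$, which rewrites $\sum_T(a\nabla\Phi,\nabla e_0)_T$ as $\sum_T(a\nabla_w Q_h\Phi,\nabla_w e_h)_T$ plus a boundary remainder carrying the factor $\nabla\Phi-\bbQ_h\nabla\Phi$. Adding and subtracting the stabilizer $s(e_h,Q_h\Phi)$, this converts the right-hand side into $\aa_s(e_h,Q_h\Phi)$ plus two face-integral remainders. Since $\Phi\in H_0^1(\Omega)$, the test function $Q_h\Phi$ lies in $V_h^0$, so I can then apply the error equation (\ref{ee}) with $v=Q_h\Phi$ to replace $\aa_s(e_h,Q_h\Phi)$ by
\[
\sum_{T\in\T_h}\langle a(\nabla u-\bbQ_h\nabla u)\cdot\bn,\,Q_0\Phi-Q_b\Phi\rangle_{\pT} \;+\; s(Q_hu,Q_h\Phi).
\]
Collecting everything gives an expression for $\|e_0\|^2$ as a sum of four boundary-type terms.

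Each of the four terms must be shown to be bounded by $Ch^{k+1}\|u\|_{k+1}\|e_0\|$. Three of them are face-integral pairings of an $O(h^k\|u\|_{k+1})$ factor with an $O(h\|\Phi\|_2)$ factor; I would estimate them by a Cauchy--Schwarz split into weighted face norms, using the trace inequality (\ref{trace}) together with (\ref{Rh}) (applied with $k$ for $u$ and with $k=1$ for $\Phi$) and the $L^2$ approximation bounds (\ref{Qh}). The stabilizer term $s(Q_hu,Q_h\Phi)$ requires an extra observation: on each face $e$, $Q_0u-Q_bu=Q_b(Q_0u-u)$ because the restriction of a $P_k(T)$ function to a flat face lies in $P_k(e)$, and analogously for $\Phi$; this lets me bound both factors in $s(Q_hu,Q_h\Phi)$ by the standard trace/approximation chain without ever invoking $\3bar Q_hu\3bar$ (which would not carry the needed $h$). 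The $H^2$-regularity (\ref{reg}) then absorbs $\|\Phi\|_2$ into $\|e_0\|$, and dividing by $\|e_0\|$ yields (\ref{err2}).

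The main obstacle is the stabilization bookkeeping: a naive bound $|s(Q_hu,Q_h\Phi)|\le \3bar Q_hu\3bar\,\3bar Q_h\Phi\3bar$ only gives $O(h)$ and loses one power. The step that buys back the extra $h$ is the projection identity $(Q_0u)|_e-Q_bu = Q_b(Q_0u-u)$ together with the trace inequality applied to $Q_0u-u$, which turns a stabilization factor into an approximation factor and is the only non-routine ingredient of the argument.
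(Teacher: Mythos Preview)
Your proposal is correct and follows essentially the same duality argument as the paper: test (\ref{dual}) against $e_0$, use identity (\ref{j1}) with $\phi=\Phi$ to pass to $\nabla_w Q_h\Phi$, invoke the error equation (\ref{ee}) with $v=Q_h\Phi$, and bound the resulting four face terms, including the key observation that $Q_0u-Q_bu=Q_b(Q_0u-u)$ rescues the extra power of $h$ in $s(Q_hu,Q_h\Phi)$. The only omission in your tool list is that the two terms carrying $e_0-e_b$ (namely $s(e_h,Q_h\Phi)$ and the $\bbQ_h\nabla\Phi-\nabla\Phi$ face integral) need the $H^1$ estimate (\ref{err1}) to supply $\3bar e_h\3bar\le Ch^k\|u\|_{k+1}$, not just (\ref{trace}), (\ref{Rh}), (\ref{Qh}); this is exactly how the paper handles (\ref{3rd-term-complete}) and (\ref{4th-term-complete}).
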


\begin{proof}
By testing (\ref{dual}) with $e_0$ we obtain
\begin{eqnarray}\nonumber
\|e_0\|^2&=&-(\nabla\cdot (a\nabla\Phi),e_0)\\
&=&\sum_{T\in\T_h}(a\nabla \Phi,\ \nabla e_0)_T-\sum_{T\in\T_h}\l
a\nabla\Phi\cdot\bn,\ e_0- e_b\r_{\pT}.\label{jw.08}
\end{eqnarray}
Setting $\phi=\Phi$ and $v=e_h$ in (\ref{j1}) yields
\begin{eqnarray}
(a\nabla_w Q_h\Phi,\;\nabla_w e_h)_T=(a\nabla\Phi,\;\nabla e_0)_T-\l
(a\bbQ_h\nabla\Phi)\cdot\bn,\ e_0-e_b\r_\pT.\label{j1-new}
\end{eqnarray}
Substituting (\ref{j1-new}) into (\ref{jw.08}) gives
\begin{equation}\label{m2}
\|e_0\|^2=(a\nabla_w e_h,\ \nabla_w Q_h\Phi)+\sum_{T\in\T_h}\l
a(\bbQ_h\nabla\Phi-\nabla\Phi)\cdot\bn,\ e_0-e_b\r_{\pT}.
\end{equation}
It follows from the error equation (\ref{ee}) that
\begin{eqnarray}
(a\nabla_w e_h,\ \nabla_w Q_h\Phi)&=&\sum_{T\in\T_h} \langle
a(\nabla u-\bbQ_h\nabla u)\cdot\bn,\;
Q_0\Phi-Q_b\Phi\rangle_\pT\nonumber\\
&+&s(Q_hu,\ Q_h\Phi)-s(e_h,\ Q_h\Phi).\label{m3}
\end{eqnarray}
By combining (\ref{m2}) with (\ref{m3}) we arrive at
\begin{eqnarray}\label{m4}
\|e_0\|^2&=&\sum_{T\in \T_h} \langle a(\nabla u-\bbQ_h\nabla
u)\cdot\bn,\;
Q_0\Phi-Q_b\Phi\rangle_\pT\\
&&+s(Q_hu,\ Q_h\Phi)-s(e_h,\ Q_h\Phi)\nonumber\\
&&+\sum_{T\in\T_h}\l a(\bbQ_h\nabla\Phi-\nabla\Phi)\cdot\bn,\
e_0-e_b\r_{\pT}.\nonumber
\end{eqnarray}

Let us bound the terms on the right hand side of (\ref{m4}) one by
one. Using the Cauchy-Schwarz inequality and the definition of $Q_b$
we obtain
\begin{eqnarray}\label{1st-term}
&&\left|\sum_{T\in\T_h} \langle a(\nabla u-\bbQ_h\nabla
u)\cdot\bn,\;
Q_0\Phi-Q_b\Phi\rangle_\pT \right|\\
&& \le \left(\sum_{T\in\T_h}\|a(\nabla u-\bbQ_h\nabla
u)\|^2_\pT\right)^{1/2}
\left(\sum_{T\in\T_h}\|Q_0\Phi-Q_b\Phi\|^2_\pT\right)^{1/2}\nonumber \\
&&\le C\left(\sum_{T\in\T_h}\|a(\nabla u-\bbQ_h\nabla
u)\|^2_\pT\right)^{1/2}
\left(\sum_{T\in\T_h}\|Q_0\Phi-\Phi\|^2_\pT\right)^{1/2} \nonumber
\end{eqnarray}
From the trace inequality (\ref{trace}) and the estimate (\ref{Qh})
we have
$$
\left(\sum_{T\in\T_h}\|Q_0\Phi-\Phi\|^2_\pT\right)^{1/2} \leq C
h^{\frac32}\|\Phi\|_2
$$
and
$$
\left(\sum_{T\in\T_h}\|a(\nabla u-\bbQ_h\nabla
u)\|^2_\pT\right)^{1/2} \leq Ch^{k-\frac12}\|u\|_{k+1}.
$$
Substituting the above two inequalities into (\ref{1st-term}) we
obtain
\begin{eqnarray}\label{1st-term-complete}
\left|\sum_{T\in\T_h} \langle a(\nabla u-\bbQ_h\nabla u)\cdot\bn,\;
Q_0\Phi-Q_b\Phi\rangle_\pT \right| \leq C h^{k+1} \|u\|_{k+1}
\|\Phi\|_2.
\end{eqnarray}
Analogously, it follows from the definition of $Q_b$, the trace
inequality (\ref{trace}), and the estimate (\ref{Qh}) that
\begin{eqnarray}\nonumber
\left|s(Q_hu,\; Q_h\Phi)\right|&\le & \rho\sum_{T\in\T_h}h_T^{-1}
\left|(Q_0u-Q_bu,\ Q_0\Phi-Q_b\Phi)_\pT\right|\\
&\le& C\left(\sum_{T\in\T_h}h_T^{-1}\|Q_0u-u\|^2_\pT\right)^{1/2}
\left(\sum_{T\in\T_h}h_T^{-1}\|Q_0\Phi-\Phi\|^2_\pT\right)^{1/2}\nonumber  \\
&\le& Ch^{k+1}\|u\|_{k+1}\|\Phi\|_2.\label{2nd-term-complete}
\end{eqnarray}
The estimates (\ref{mmm1}) and (\ref{err1}) imply
\begin{eqnarray}\label{3rd-term-complete}
|s(e_h,\ Q_h\Phi)|\le Ch\|\Phi\|_2\3bar e_h\3bar\le
Ch^{k+1}\|u\|_{k+1}\|\Phi\|_2.
\end{eqnarray}
Similarly, it follows from (\ref{mmm2}) and (\ref{err1}) that
\begin{eqnarray}\label{4th-term-complete}
\left|\sum_{T\in\T_h}\l a(\bbQ_h\nabla \Phi-\nabla \Phi)\cdot\bn,\
e_0-e_b\r_\pT\right| &\le&  Ch^{k+1}\|u\|_{k+1}\|\Phi\|_2.
\end{eqnarray}
Now substituting (\ref{1st-term-complete})-(\ref{4th-term-complete})
into (\ref{m4}) yields
$$
\|e_0\|^2 \leq C h^{k+1}\|u\|_{k+1} \|\Phi\|_2,
$$
which, combined with the regularity assumption (\ref{reg}), gives
the desired optimal order error estimate (\ref{err2}).
\end{proof}

\section{Numerical Experiments}\label{Section:numerical-experiments}
The goal of this section is to numerically verify the convergence
theory for the WG finite element method (\ref{wg}) through some
computational examples. In particular, the following issues shall be
examined:
\begin{enumerate}
\item[(N1)] rate of convergence for WG solutions in various measures;
\item[(N2)] accuracy of WG solutions on polyhedral meshes with and without
hanging nodes.
\end{enumerate}
For simplicity, all the numerical experiments are conducted by using
piecewise linear functions (i.e., $k=1$) in the finite element space
$V_h$ as defined in (\ref{vhspace}).

\medskip
For any given $v=\{v_0,v_b\}\in V_h$, recall that its discrete weak
gradient, $\nabla_w v\in [P_0(T)]^d$, is defined locally by the
following equation
$$
(\nabla_w v,{\bf q})_T=-(v_0,\nabla\cdot{\bf q})_T+\langle v_b,{\bf
q}\cdot{\bf n}\rangle_{\partial T},\qquad \forall {\bf q}\in
[P_0(T)]^d.
$$
Since ${\bf q}\in [P_0(T)]^d$, the above equation can be simplified
as
\begin{eqnarray}
(\nabla_w v,{\bf q})_T=\langle v_b,{\bf q}\cdot{\bf
n}\rangle_{\partial T}.
\end{eqnarray}

The error for the WG solution of (\ref{wg}) shall be measured in
three norms defined as follows:
\begin{eqnarray*}
\3bar v \3bar^2:=\sum_{T\in\mathcal{T}_h}\bigg(\int_T|\nabla_w
v|^2dx+h_{T}^{-1}\int_{\partial T}(v_0-v_b)^2ds\bigg)&&\qquad\mbox{(A discrete $H^1$-norm)},\\
\|v_h\|^2:=\sum_{T\in\mathcal{T}_h}\int_T|v_0|^2dx&&\qquad\mbox{(Element-based
$L^2$-norm)},\\
\|v\|_{\mathcal{E}_h}^2:=\sum_{e\in\mathcal{E}_h}h_e\int_e
|v_b|^2ds&&\qquad\mbox{(Edge-based $L^2$-norm)}.
\end{eqnarray*}

\subsection{Case 1: Poisson Problem on Uniform Meshes} Consider the Poisson
problem that seeks an unknown function $u=u(x,y)$ satisfying
$$
-\Delta u=f
$$
in the square domain $\Omega=(0,1)^2$ with homogeneous Dirichlet
boundary condition. The exact solution is given by $u=\sin(\pi
x)\sin(\pi y)$, and the function $f=f(x,y)$ is given to match the
exact solution.

\begin{table}[h]
\caption{Case 1. WG solutions and their convergence on rectangular
elements.}\label{ex1_rect} \center
\begin{tabular}{|c||c|c|c|}
\hline
meshsize $h^{-1}$ & $\3bar Q_hu-u_h\3bar$ & $\|Q_hu-u_h\|$ & $\|Q_hu-u_h\|_{\mathcal{E}_h}$\\
\hline\hline
  4    &7.8668e-001  &1.3782e-001 &1.7244e-02 \\ \hline
  8    &3.6731e-001  &3.5717e-002 &4.5321e-03\\ \hline
  16   &1.7954e-001  &9.0101e-003 &1.1362e-03\\ \hline
  32   &8.9221e-002  &2.2576e-003 &2.8401e-04\\ \hline
  64   &4.4541e-002  &5.6472e-004 &7.0995e-05\\ \hline
  128  &2.2262e-002  &1.4120e-004 &1.7748e-05\\ \hline\hline
$O(h^r),r=$  & 1.0245 &1.9886 &1.9889\\ \hline
\end{tabular}
\end{table}

\begin{table}[h]
\caption{Case 1. WG solutions and their convergence on triangular
elements.}\label{ex1_tri} \center
\begin{tabular}{|c||c|c|c|}
\hline
meshsize $h^{-1}$  & $\3bar Q_hu-u_h\3bar$ & $\|Q_hu-u_h\|$ & $\|Q_hu-u_h\|_{\mathcal{E}_h}$\\
\hline\hline
  4    &1.3567e+000  &1.5399e-001 &6.5585e-02\\ \hline
  8    &6.8946e-001  &3.9419e-002 &1.3106e-02\\ \hline
  16   &3.4613e-001  &9.9131e-003 &3.0102e-03\\ \hline
  32   &1.7324e-001  &2.4819e-003 &7.3455e-04\\ \hline
  64   &8.6641e-002  &6.2072e-004 &1.8249e-04\\ \hline
  128  &4.3323e-002  &1.5519e-004 &4.5550e-05\\ \hline\hline
$O(h^r),r=$  & 0.9949 &1.9925 &2.0855\\ \hline
\end{tabular}
\end{table}

Tables \ref{ex1_rect} and \ref{ex1_tri} show the rate of convergence
for the corresponding WG solutions in $H^1$ and $L^2$ norms on
rectangular and triangular meshes, respectively. The rectangular
mesh is constructed by uniformly partitioning the domain into
$n\times n$ sub-rectangles. The triangular mesh is obtained by
dividing each rectangular element into two triangles by the diagonal
line with a negative slope. The mesh size is denoted by $h=1/n$ for
both the rectangular and triangular meshes. The numerical results
indicate that the WG solution with $k=1$ is convergent with rate
$O(h)$ in $H^1$ and $O(h^2)$ in $L^2$ norms.

\subsection{Case 2: Degenerate Elliptic Problems} The second testing problem
is defined in the square domain $\Omega=(0,1)^2$ for the following
second order partial differential equation
$$
-\nabla\cdot(a \nabla u)=f,\qquad a=xy.
$$
Note that the coefficient $a=xy\ge 0$ in the domain and vanishes at
the origin. The PDE under consideration is thus elliptic, but with
some degeneracy near the origin. The WG finite element method
(\ref{wg}) is still applicable, and the corresponding discrete
problem admits a unique solution. However, the convergence theory
established in previous sections for the WG finite element method
can not be applied without any modification.

In our numerical tests, the exact solution is given by
$u=x(1-x)y(1-y)$, which corresponds to a homogeneous Dirichlet
boundary condition. Like the case 1, the function $f=f(x,y)$ is
given to match the exact solution.

\begin{table}[h!]
\caption{Comparison of Convergence for Three Finite Element Schemes
for a Degenerate Elliptic Problem} \label{tab:wg-dg}

\begin{tabular}{||c||cc||cc||cc||}

\multicolumn{1}{c}{\; }  & \multicolumn{2}{c}{WG-FEM $P_1P_1$} & \multicolumn{2}{c}{DG $P_1$} & \multicolumn{2}{c}{WG-FEM $P_0P_0$}\\
    \hline
   $\begin{matrix} \mbox{meshsize} \\
   h^{-1}\end{matrix} $& $H^1$-error & $L^2$-error & $H^1$-error & $L^2$-error & $H^1$-error & $L^2$-error\\
    \hline\hline

${8}$ &  2.51e-02 &  1.46e-03 & 3.98e-02 &  6.29e-03 & 5.16e-02 &  2.01e-03\\ \hline
 ${16}$   &  1.26e-02 &  3.74e-04 & 3.01e-02 &
2.92e-03 & 3.98e-02 &  9.30e-04\\ \hline
 ${32}$   &  6.31e-03 &  9.47e-05 &  2.23e-02 &  1.32e-03 & 2.96e-02 &  4.02e-04\\ \hline
 ${64}$   &  3.16e-03 &  2.39e-05 &  1.62e-02  & 5.89e-04 & 2.15e-02 &  1.70e-04\\ \hline
 ${128}$   &  1.58e-03 &  6.04e-06 &  1.17e-02  & 2.64e-04 &  1.55e-02  & 7.17e-05\\ \hline \hline
   $\begin{matrix}O(h^r)\\r=\end{matrix}$ & 9.97e-01 &   1.98e+00 &4.42e-01 &   1.15e+00 & 4.36e-01 &   1.21e+00
 \\ \hline
   \end{tabular}
\end{table}

In Table \ref{tab:wg-dg}, the column corresponding to WG-FEM
$P_1P_1$ refers to the computational results obtained from the
numerical scheme (\ref{wg}) with piecewise linear functions on each
element and its edges. The column corresponding to DG $P_1$ is the
result arising from the interior penalty method with piecewise
linear functions. The last column corresponds to results from the
weak Galerkin method detailed in \cite{wy} with piecewise constants.
These three methods were chosen for comparison because they have the
same rate of convergence in theory when the error is measured
between the finite element solution and a certain interpolation of
the exact solution.

The computational results indicate that the new WG-FEM scheme
(\ref{wg}) presented and analyzed in the present paper has optimal
order of convergence in both $H^1$ and $L^2$, while the other two
converges with significantly lower orders. The $H^1$ norm in the
table refers to discrete equivalence for each respective scheme.

\subsection{Case 3: WG-FEM on Deformed Rectangular Meshes} We solve the same problem as in
Case 1 on deformed rectangular meshes. We start with an initial
deformed rectangular mesh, shown as in Figure \ref{Deform_Mesh}
(Left). The mesh is then successively refined by connecting the
barycenter of each (coarse) element with the middle points of its
edges, as shown in the dotted line in Figure \ref{Deform_Mesh}
(Right). The numerical results are presented in Table \ref{ex3_rec},
which show an optimal order of convergence in various norms.

\begin{figure}[!h]
\centering
\begin{tabular}{cc}
  \resizebox{2.45in}{2.1in}{\includegraphics{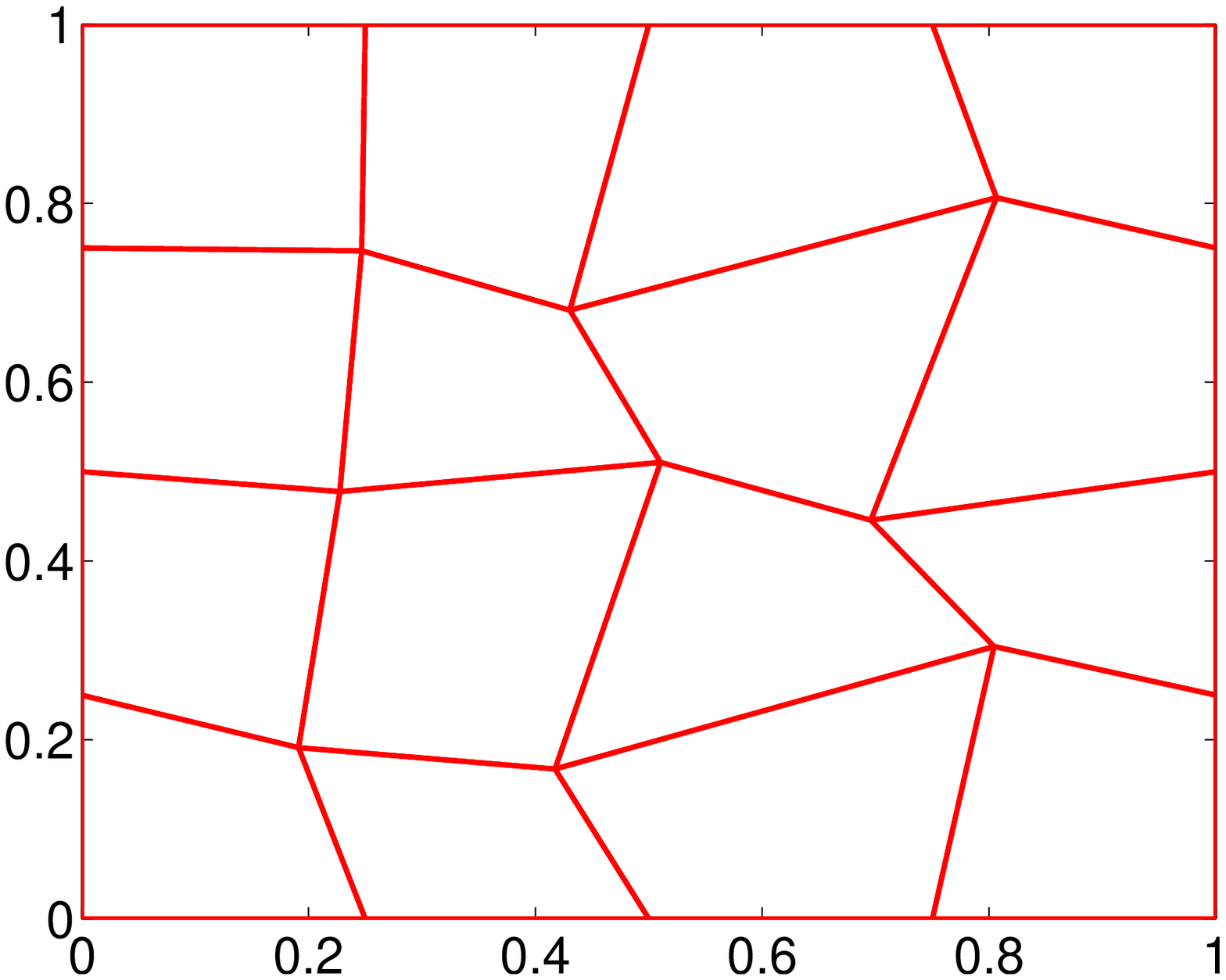}}
  \resizebox{2.45in}{2.1in}{\includegraphics{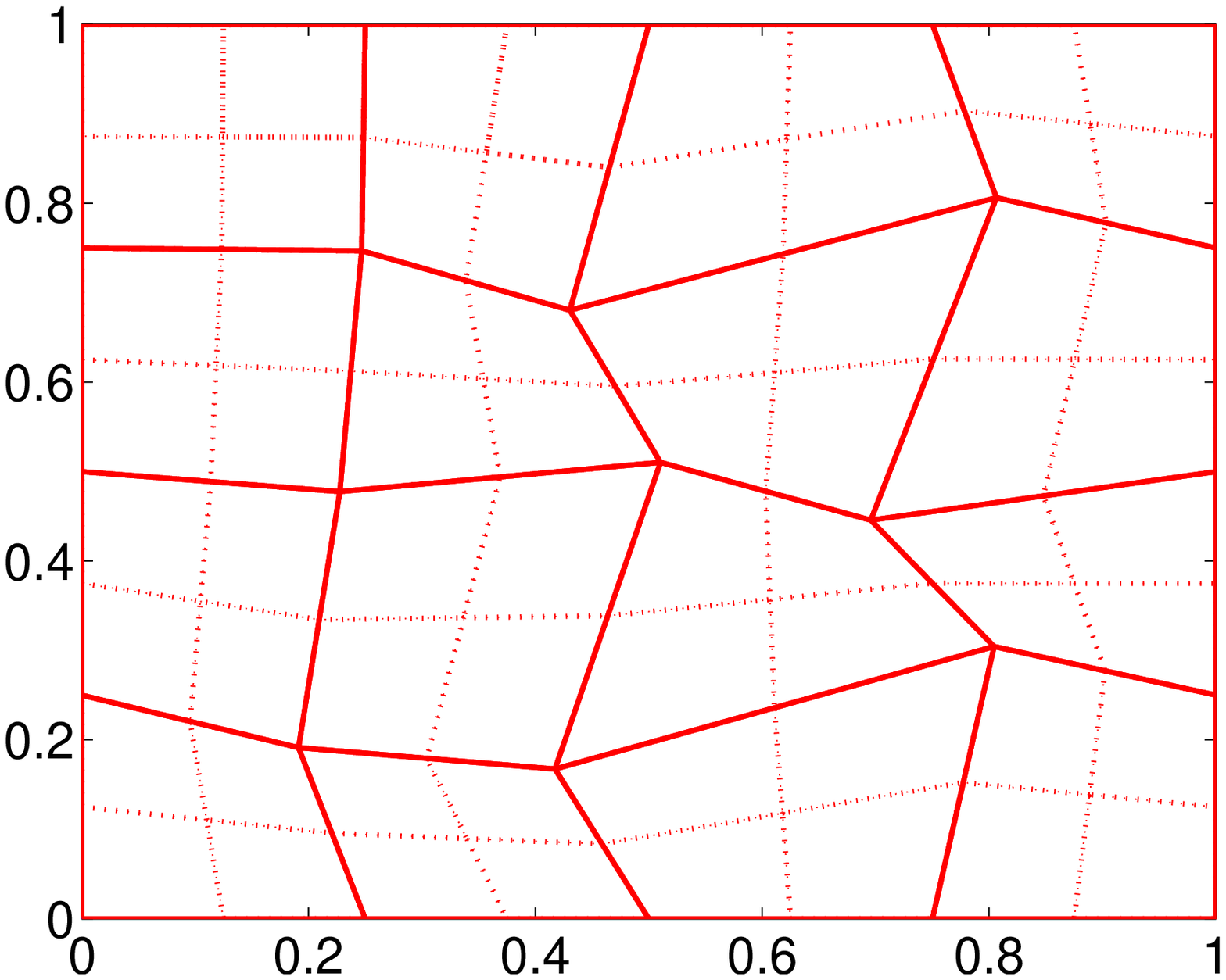}}
\end{tabular}
\caption{Case 3: An initial mesh (Left) and its refinement (Right).
}\label{Deform_Mesh}
\end{figure}

\begin{table}[h]
\caption{Case 3. Convergence rate on deformed
rectangles.}\label{ex3_rec} \center
\begin{tabular}{|c||c|c|c|}
\hline
meshsize {$h$} & $\3bar Q_hu-u_h\3bar$ & $\|Q_hu-u_h\|$ & $\|Q_hu-u_h\|_{\mathcal{E}_h}$\\
\hline\hline
   2.8790e-01  & 2.3056e+00  & 3.0235e-01 &8.2633e-02\\ \hline
   1.4395e-01  & 1.1673e+00  & 7.8108e-02 &2.1396e-02\\ \hline
   7.1974e-02  & 5.8473e-01  & 1.9652e-02 &5.3912e-03\\ \hline
   3.5987e-02  & 2.9241e-01  & 4.9203e-03 &1.3503e-03\\ \hline
   1.7993e-02  & 1.4619e-01  & 1.2445e-03 &3.3774e-04\\ \hline
   8.9967e-03  & 7.3095e-02  & 3.1112e-04 &8.4445e-05\\ \hline \hline
$O(h^r),r=$    & 0.9828      &1.9618 &1.9893\\ \hline
\end{tabular}

\end{table}

\subsection{Case 4: WG-FEM on Meshes with Hanging Nodes} We solve the same problem as in
Case 1 on deformed rectangular meshes with hanging nodes in the
finite element partition. The initial mesh is shown as in Figure
\ref{Deform_Mesh2} (Left). The mesh on the right in Figure
\ref{Deform_Mesh2} is generated by following the same uniform
refinement procedure as described in Case 3. It should be pointed
out that the initial mesh has a hanging node in the usual
definition.

For the finite element partition $\T_h$ with hanging nodes, the WG
finite element method (\ref{wg}) must be modified as follows. For
edge containing hanging nodes, the edge shall be further partitioned
into smaller segments by using the hanging nodes. Then the
corresponding finite element space defined on this edge will be
piecewise linear functions with respect to the new partition; the
finite element space on each element remains unchanged. For example,
in Figure \ref{Mesh_HangNode}, the elements $K_1$, $K_2$, and $K_3$
share one hanging node $M$. In the WG finite element method, the
edge $AB$ needs to be divided into two pieces: $AM$ and $MB$. The
corresponding finite element function $v_b$ is taken as a piecewise
linear function on edges $AM$ and $MB$. We point out that the
refinement method adopted here may produce elements around the
hanging node which are not shape regular as defined in Section 3.
The numerical results are presented in Table \ref{ex4_rec}. Readers
are encouraged to draw conclusions from this table.

\begin{table}[h!]
\caption{Case 4: WG solutions and their convergence on deformed
rectangular elements with hanging nodes.}\label{ex4_rec} \center
\begin{tabular}{|c||c|c|c|}
\hline
meshsize {$h$} & $\3bar Q_hu-u_h\3bar$ & $\|Q_hu-u_h\|$ & $\|Q_hu-u_h\|_{\mathcal{E}_h}$\\
\hline\hline
   4.2512e-01  & 3.8064e+00 &  9.1677e-01 &2.5509e-01\\ \hline
   2.1256e-01  & 2.2593e+00 &  3.2970e-01 &7.3355e-02 \\ \hline
   1.0628e-01  & 1.2308e+00 &  9.2302e-02 &2.2037e-02\\ \hline
   5.3140e-02  & 6.3470e-01 &  2.3300e-02 &6.1413e-03\\ \hline
   2.6570e-02  & 3.2104e-01 &  5.8376e-03 &1.7649e-03\\ \hline
   1.3285e-02  & 1.6129e-01 &  1.7094e-03 &5.1822e-04\\ \hline \hline
$O(h^r),r=$    & 0.9201      &1.8508 &1.7912\\ \hline
\end{tabular}
\end{table}

\begin{figure}[!h]
\centering
\begin{tabular}{cc}
  \resizebox{2.45in}{2.15in}{\includegraphics{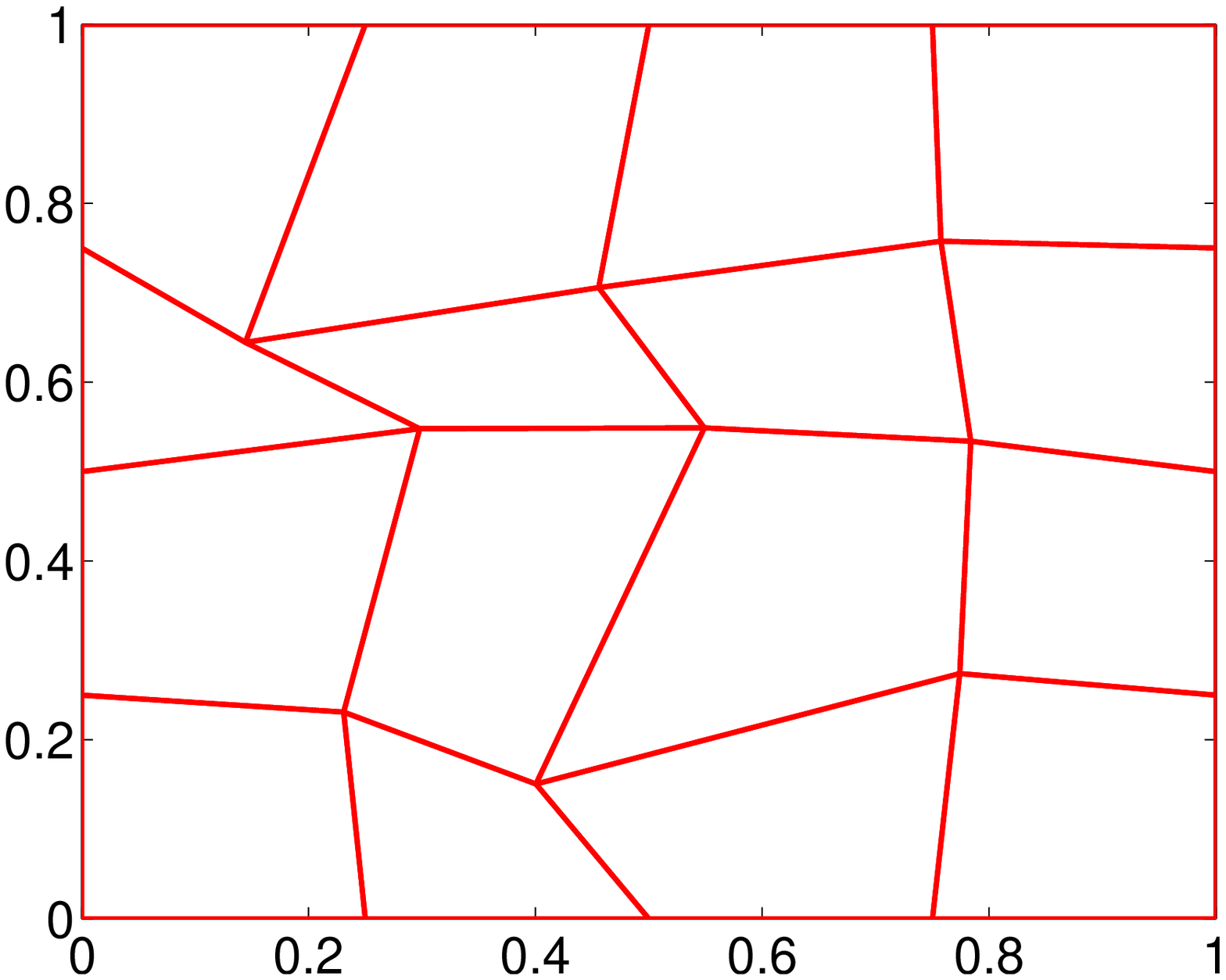}}
  \resizebox{2.45in}{2.15in}{\includegraphics{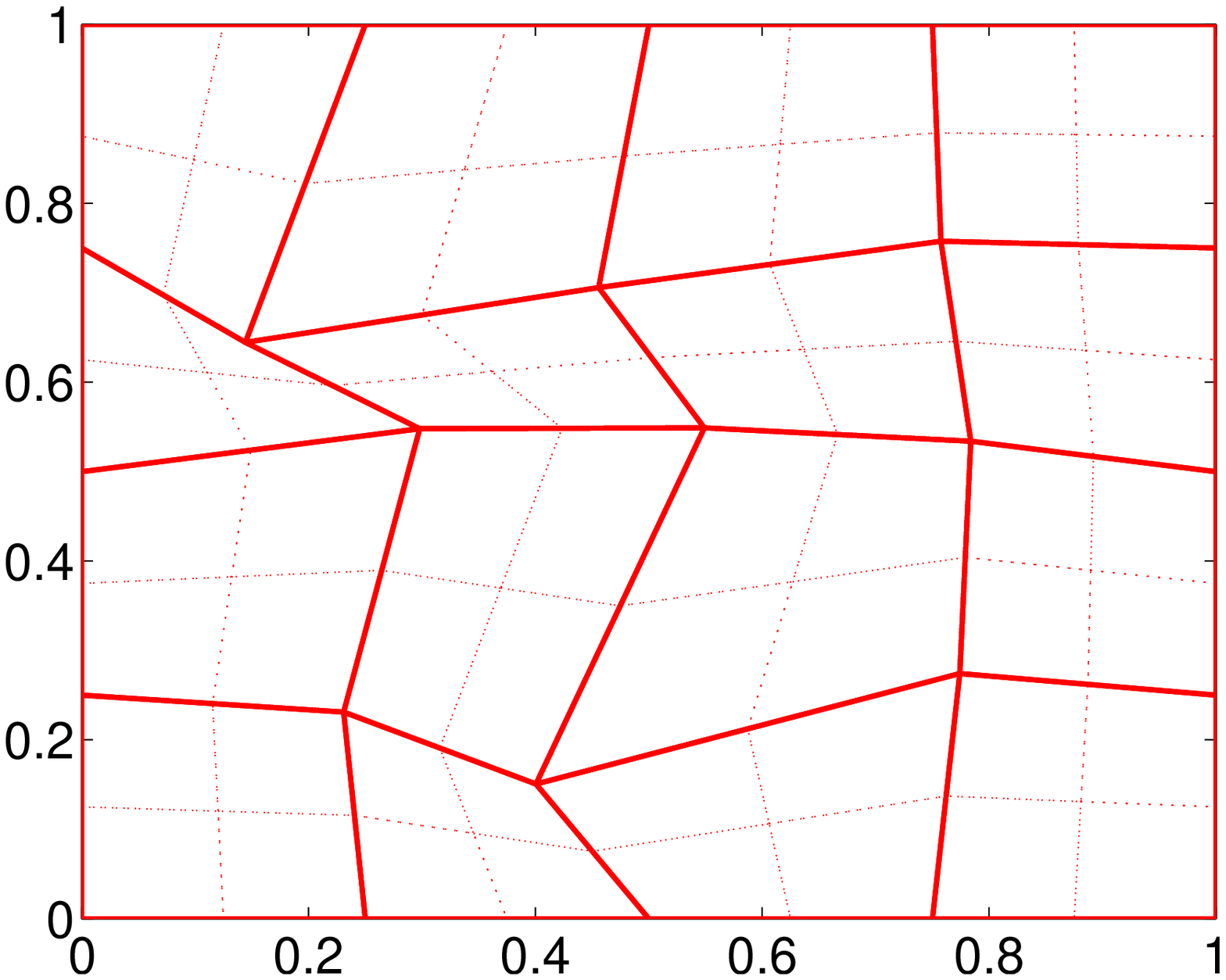}}
\end{tabular}
\caption{Case 4: Mesh level 1 (Left) and Mesh level 2(Right).
}\label{Deform_Mesh2}
\end{figure}

\begin{figure}[!h]
\centering
\begin{tabular}{c}
  \resizebox{2.45in}{2.0in}{\includegraphics{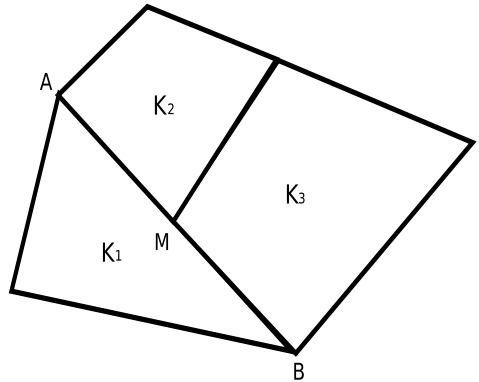}}
  \end{tabular}
\caption{Case 4: Elements around a hanging node in the mesh.
}\label{Mesh_HangNode}
\end{figure}

\vfill\eject


\begin{thebibliography}{99}

\bibitem{ab}
{\sc D. N. Arnold and F. Brezzi}, {\em Mixed and nonconforming
finite element methods: implementation, postprocessing and error
estimates}, RAIRO Modél. Math. Anal. Numér., 19 (1985), pp.~7-32.

\bibitem{abcm}
{\sc D. N. Arnold, F. Brezzi, B. Cockburn, and L. D. Marini}, {\em
Unified analysis of discontinuous Galerkin methods for elliptic
problems},  SIAM J. Numer. Anal., 39 (2002), pp.~1749-1779.

\bibitem{babuska}
{\sc I. Babu\u{s}ka}, {\em The finite element method with Lagrange
multipliers}, Numer. Math., 20 (1973), pp.~179-192.

\bibitem{berndt}
{\sc M. Berndt, K. Lipnikov, J. D. Moulton, and M. Shashkov}, {\em
Convergence of mimetic finite difference discretizations of the
diffusion equation}, East-West J. Numer. Math. 9 (2001),
pp.~253-294.

\bibitem{sue}
{\sc S. Brenner and R. Scott}, {\em The Mathematical Theory of
Finite Element Mathods},  Springer-Verlag, New York, 1994.

\bibitem{brezzi}
{\sc F. Brezzi}, {\em On the existence, uniqueness, and
approximation of saddle point problems arising from Lagrange
multipliers}, RAIRO, 8 (1974), pp.~129-151.

\bibitem{bf}
{\sc F. Brezzi and M. Fortin}, {\em Mixed and Hybrid Finite
Elements}, Springer-Verlag, New York, 1991.

\bibitem{bddf}
{\sc F. Brezzi, J. Douglas, Jr., R. Durán and M. Fortin}, {\em Mixed
finite elements for second order elliptic problems in three
variables}, Numer. Math., 51 (1987), pp.~237-250.

\bibitem{bdm}
{\sc F. Brezzi, J. Douglas, Jr., and L.D. Marini}, {\em Two families
of mixed finite elements for second order elliptic problems}, Numer.
Math., 47 (1985), pp.~217-235.

\bibitem{bls}
{\sc F. Brezzi, K. Lipnikov, and M. Shashkov}, {\em Convergence of
the mimetic finite difference method for diffusion problems on
polyhedral meshes}, SIAM J. Numer. Anal., 43 (2005), No. 5,
pp.~1872-1896.

\bibitem{ci}
{\sc P.G. Ciarlet}, {\em The Finite Element Method for Elliptic
Problems}, North-Holland, New York, 1978.

\bibitem{cgl}
{\sc B. Cockburn, J. Gopalakrishnan, and R. Lazarov}, {\em Unified
hybridization of discontinuous Galerkin, mixed, and continuous
Galerkin methods for second order elliptic problems}, SIAM J. Numer.
Anal. 47 (2009), pp.~1319-1365.

\bibitem{gilbarg}
{\sc D. Gilbarg and N. S. Trudinger}, {\em Elliptic Partial
Differential Equations of Second Order}, Springer- Verlag,
Berlin-New York, 1977. MR 57:13109.


\bibitem{wy-mz}
{\sc L. Mu, J. Wang, X. Ye, and S. Zhao}, {\em A numerical study on
the weak Galerkin method for the Helmholtz equation with large wave
numbers}, arXiv:1111.0671v1, 2011.

\bibitem{wy-mgz}
{\sc L. Mu, J. Wang, G. Wei, X. Ye, and S. Zhao}, {\em Weak Galerkin
methods for second order elliptic interface problems},
arXiv:1201.6438v2, 2012.

\bibitem{rt}
{\sc P. Raviart and J. Thomas}, {\em A mixed finite element method
for second order elliptic problems}, Mathematical Aspects of the
Finite Element Method, I. Galligani, E. Magenes, eds., Lectures
Notes in Math. 606, Springer-Verlag, New York, 1977.

\bibitem{wy}
{\sc J. Wang and X. Ye}, {\em A weak Galerkin finite element method
for second-order elliptic problems}, arXiv:1104.2897v1, 2011.

\bibitem{wy1}
{\sc J. Wang and X. Ye}, {\em A weak Galerkin mixed finite element
method for second-order elliptic problems}, arXiv:1202.3655v1, 2012.
\end{thebibliography}
\end{document}